\newtheorem{theorem}{Theorem}
\newtheorem{lemma}{Lemma}
\theoremstyle{definition}\newtheorem{definition}{Definition}
\newtheorem{example}{Example}
\newenvironment{theorem*}[2]{\vspace{8pt} \noindent \textbf{Theorem {#1}} ({#2})\textbf{.} \em}{}
\begin{document}

\title{A Cubical Antipodal Theorem}

\date{\today}

\author[Kinneberg, Mazel-Gee, Sondjaja, and Su]{
{\bf Kyle E. Kinneberg}\\
Department of Mathematics\\ Claremont McKenna College\\
850 Columbia Ave.\\ Claremont CA 91711, U.S.A.\\
{\tt \lowercase{kkinneberg09@cmc.edu}}\\
\\
{\bf Aaron Mazel-Gee}\\
Department of Mathematics\\Brown University\\
151 Thayer Street\\ Providence, RI 02912, U.S.A.\\
{\tt \lowercase{Aaron\_Mazel-Gee@brown.edu}}\\
\\
{\bf Tia Sondjaja}\\
School of ORIE\\ 206 Rhodes Hall\\ Cornell University\\
Ithaca, NY 14853, U.S.A.\\
{\tt \lowercase{ms999@cornell.edu}}\\
\\
{\bf Francis Edward Su$^*$}\\
Department of Mathematics\\ Harvey Mudd College\\
301 Platt Blvd.\\ Claremont CA 91711, U.S.A.\\
{\tt \lowercase{su@math.hmc.edu}}
}

\thanks{$^*$Corresponding author: voice 1-909-607-3616, fax 1-909-621-8366}
\thanks{Keywords: Lusternik-Schnirelmann-Borsuk theorem, hypercube, ridge cover, facet cover}
\thanks{The authors gratefully acknowlege partial support by NSF Grant DMS-0701308 (Su), NSF-REU Grant DMS-0453284 (Claremont Colleges REU Site), and a Beckman Research Grant at Harvey Mudd College.}

\begin{abstract}
The classical Lusternik-Schnirelman-Borsuk theorem states that
if a $d$-sphere is covered by $d+1$ closed sets, then at least one of
the sets must contain a pair of antipodal points.  In this paper, we
prove a combinatorial version of this theorem for hypercubes.  It is not hard to show that for any
cover of the facets of a $d$-cube by $d$ sets of facets,
at least one such set contains a pair of antipodal ridges.  However, we show that for any
cover of the ridges of a $d$-cube by $d$ sets of ridges, at least one set 
must contain a pair of antipodal $k$-faces, and we determine the maximum
$k$ for which this must occur, for all dimensions except $d=5$.
\end{abstract}

\maketitle

\section{Introduction}

Let $S^d$ denote the standard $d$-sphere, the set of all points of
distance one from the origin in ${\mathbb R^{d+1}}$.  This is a
$d$-dimensional geometric object with a natural notion of {\em
antipode}: if $x$ is in $S^d$, so is $-x$, the {\em antipode} of $x$.
A classical result of Lusternik-Schnirelman-Borsuk \cite{Bors33,
LuSc30} (which we refer to as the LSB theorem)
says that if $S^d$ is covered by $d+1$ closed sets, then at
least one of the sets contains a pair of antipodes.

The motivation for this paper grew out of a desire to develop a combinatorial version of the LSB theorem that might provide an alternate combinatorial route to proving this classical topological result.  Such methods have been used, for instance, to prove the Brouwer fixed point theorem and the Borsuk-Ulam theorem via combinatorial lemmas due to Sperner and Tucker, e.g., see \cite{KKM29,Sper28,Tuck45}.

Let $C^d$ denote the $d$-cube, sometimes called the $d$-dimensional
{\em hypercube}, which is a polytope in ${\mathbb R^{d}}$ that is the
product of $d$ line segments.  (For convenience, we take each line
segment to be $[0,1]$.)  This polytope $C^d$ has faces which are also
cubes; these are of dimension $d$ or less.  For simplicity, 
we call a $k$-dimensional face a $k$-face.
A combinatorial version of $S^{d-1}$ is the boundary of $C^d$, since
this boundary is topologically a $(d-1)$-sphere.  Note that this
boundary is the union of all {\em facets}, which are $(d-1)$-faces of
one less dimension than the cube itself.  

We may ask whether there is
a combinatorial version of the LSB theorem in this context.  
Such a
version might begin by covering the facets of $C^d$ by $d$ sets that
are unions of facets.
Let us call such a collection of sets a {\em facet cover}.  The
classical LSB theorem would then guarantee the existence of a set in
the facet cover that contains a pair of antipodal points, and by
combinatorial methods we shall see in Theorem \ref{facet-cover} that,
in fact, we can guarantee the existence of a set that contains a pair
of antipodal $(d-2)$-faces.

What may be surprising is that we can say something about the
existence (and dimension) of antipodal faces contained in some 
{\em ridge cover}, which is a collection of $d$ 
sets that are unions of $(d-2)$-faces of $C^d$.  
For instance, consider the $3$-cube.  If we cover its edges with
three sets $A_1, A_2, A_3$ that are unions of whole edges, we find that
some $A_i$ must contain a pair of antipodal vertices.  However, the classical
LSB theorem does not say anything about the existence of such a pair.

Before describing our main results, we give some terminology and background.

\subsection*{Terminology and Notation}

For $d \in \mathbb{N}$, recall that $C^d$ is the convex hull of the
$2^d$ points in $\mathbb{R}^d$ whose coordinates are all either {\tt0}
or {\tt1}.  For example, the 1-cube is the line segment with endpoints
{\tt0} and {\tt1}, the 2-cube is the square with vertices {\tt00},
{\tt01}, {\tt10}, and {\tt11}, etc.  Thus, any vertex of $C^d$ is
denoted by a $d$-tuple of {\tt0}'s and {\tt1}'s.  More generally,
observe that a (closed) $k$-face may be specified by a choice of $d-k$
coordinate positions whose values are fixed at either {\tt0} or {\tt1}.  The 
$k$ remaining coordinates vary throughout this $k$-face; we call them {\em varying} coordinate positions of the $k$-face.

Thus every $k$-face of $C^d$ may be represented by a $d$-tuple of {\tt X}'s, {\tt 0}'s, and {\tt 1}'s, by putting an {\tt X} in the $k$ coordinate positions that are {\em varying} and putting the fixed values in the coordinate positions that are {\em fixed}.  This is called the {\em coordinate representation} of a face.
For example, the 2-face of $C^3$ containing the vertices
{\tt001}, {\tt101}, {\tt111}, and {\tt011} is represented by {\tt XX1}.  
We say this face has a {\em fixed coordinate} in the 3rd {\em position} with {\em coordinate value} $1$.  We say two faces {\em differ} in the $j$-th coordinate if their coordinate representations have different symbols in the $j$-th position; else they {\em agree} in that coordinate.  For example the faces {\tt XX01} and {\tt X0X1} differ in the 2nd coordinate and in the 3rd coordinate, and the two faces agree in the 1st coordinate and in the 4th coordinate.

We now define the concept of antipodes for faces of a $d$-cube.
A vertex $v_1$ is the \textit{antipode} of $v_2$ if $v_1$ and $v_2$
differ in every coordinate.  Clearly this relationship is symmetric,
and we may sometimes say $v_1$ and $v_2$ are \textit{antipodal} to
each other.  Similarly, we say a $k$-face $F_1$ is the \textit{antipode}
of a $k$-face $F_2$ if the antipode of each vertex of $F_1$ lies in $F_2$ 
and vice versa; we may also say that $F_1$ and $F_2$ are
\textit{antipodal}.  In coordinate representations, $F_1$ and $F_2$ are antipodal if and only if they have exactly the same coordinate positions fixed but differ in those fixed coordinate positions.

Furthermore, we say that two
sets $A,B$ are \textit{$k$-antipodal} to each other 
if $A$ contains a $k$-face whose antipode is in $B$.
(Clearly if $A$ is $k$-antipodal to $B$, then $A$ is 
$k'$-antipodal to $B$ for all $k' \leq k$.)  
Finally, a set that is $k$-antipodal to itself is
\textit{$k$-self-antipodal}.  We take the convention that the empty
set is an antipode of itself, which is vacuously true.

Since this paper primarily deals with faces of codimensions 1 through
4, we use the following names.  The term \textit{facet} denotes a
$k$-face of codimension 1, \textit{ridge} denotes a $k$-face of
codimension 2, \textit{peak} denotes a $k$-face of codimension 3, and
we introduce the term \textit{pinnacle} to denote a $k$-face of
codimension 4.  For example, on $C^4$, facets are 3-dimensional cells,
ridges are 2-dimensional faces, peaks are edges, and pinnacles are
vertices.  We consider the empty set to be a face of dimension $-1$.

As the $d$-cube is symmetric, we choose a specific orientation without
loss of generality.  Accordingly, we often refer to {\tt X...X0} as
the \textit{bottom facet} and to {\tt X...X1} as the \textit{top facet}. 
Furthermore, we refer to a $k$-face which intersects
both the top and bottom facets as a \textit{spanning $k$-face}.

We note two combinatorial facts about the $d$-cube which can be proven
without difficulty: it contains $2^{d-k}{d \choose k}$ $k$-faces, and
$d$ edges meet at any of its vertices.
We are now ready to introduce a key definition.

\begin{definition}
An $n$-set \textit{ridge cover of} $C^d$ is a collection of sets $A_1,
\ldots, A_n$, each a union of ridges, 
such that every ridge of $C^d$ is in at least one set.
\end{definition}

In this paper, most ridge covers we consider will have exactly $d$
sets, where $d$ is the dimension of the cube $C^d$.

\section{A cubical LSB Theorem for ridge covers}

The LSB theorem guarantees that in a collection of $d$ closed sets
covering the facets of $C^d$ (the topological equivalent of
$S^{d-1}$), at least one of the sets contains a pair of antipodal points.  
Our main theorem gives a corresponding result for ridge covers of $C^d$.

\begin{theorem}[Cubical LSB]
\label{complete}
In any $d$-set ridge cover of $C^d$ there must be a set containing a pair of 
antipodal $k(d)$-dimensional faces, where 
$$
k(d)=
\left\{
\begin{array}{ll}
d-2, & d=1 \\
d-3, & 2 \leq d \leq 4 \\  
d-4, & d \geq 5. \\
\end{array}
\right.
$$
Also, with the possible exception of $d=5$,
$k(d)$ is sharp in the
sense that there does not have to be a set in the cover that contains
a pair of antipodal $(k(d)+1)$-faces.  
\end{theorem}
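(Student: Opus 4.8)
The plan is to prove the two assertions—existence and sharpness—separately, with each splitting into the two regimes visible in the formula for $k(d)$. For existence I would argue by induction on $d$, establishing the recursion $k(d) \ge k(d-1)+1$ through a projection that peels off one coordinate; for sharpness I would exhibit explicit covers, built from stars of vertices, that realize the bound. The dimension $d=5$ must be singled out because it sits exactly on the seam between the two regimes: there the inductive recursion degrades and, dually, the star construction first becomes infeasible, so that neither the guaranteed value nor a matching cover can be pinned down by these methods.

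For the existence half, fix the last coordinate as the distinguished direction and note that a ridge of $C^d$ is either \emph{spanning} (its two fixed positions lie among the first $d-1$, so the last coordinate varies) or else lies in the top or bottom facet. Projecting out the last coordinate carries the spanning ridges bijectively onto the ridges of $C^{d-1}$, a spanning $k$-face onto a $(k-1)$-face, and antipodal spanning faces onto antipodal faces of $C^{d-1}$. Consequently a color $A_i$ contains an antipodal pair of spanning $k$-faces if and only if its projection $A_i^{\mathrm{sp}}$ is $(k-1)$-self-antipodal in $C^{d-1}$; since every ridge of $C^{d-1}$ lifts, the projections $\{A_i^{\mathrm{sp}}\}$ form a ridge cover of $C^{d-1}$, and the inductive hypothesis—applied with $k-1 = k(d-1)$—produces a $k(d-1)$-self-antipodal color, whence the corresponding $A_i$ is $(k(d-1)+1)$-self-antipodal in $C^d$, giving the recursion. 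The base cases $d \le 3$ I would verify directly, and the weak bound $k(5)\ge 1$ claimed at $d=5$ likewise follows from a direct, non-inductive argument.

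The difficulty—and the main obstacle of the whole argument—is that $\{A_i^{\mathrm{sp}}\}$ is a cover of $C^{d-1}$ by up to $d$ sets, whereas the inductive hypothesis concerns $(d-1)$-set covers; one spare color must be eliminated. I would do this using the two facets discarded by the projection: the ridges lying in the bottom facet $B$ and top facet $T$ are exactly the facets of $B \cong C^{d-1}$ and $T \cong C^{d-1}$, so the cover restricts to facet covers of $B$ and $T$, and a color has a \emph{horizontal} antipodal $k$-pair precisely when the $B$-face and the $T$-face it covers are antipodes across $B \to T$. When some color is unused by spanning ridges the projected cover already has only $d-1$ sets and induction applies verbatim; the genuinely hard case is when all $d$ colors occur among the spanning ridges, and here I would play the restricted facet covers on $B$ and $T$ against one another—invoking Theorem \ref{facet-cover}—to force either a horizontal antipodal pair outright or the collapse of a color that lets the induction proceed. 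It is exactly this elimination step that breaks down at the passage $d=4 \to d=5$, which is why $k(5)$ is only shown to be at least $1$.

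For sharpness I would first record which pairs of ridges can jointly witness an antipodal pair of faces of one higher dimension. A short case analysis shows that two ridges cover an antipodal pair of peaks (codimension-$3$ faces) if and only if their fixed positions overlap and they \emph{disagree} on every shared position; hence a family of ridges that pairwise \emph{agree} wherever their fixed positions meet contains no antipodal peaks. The cleanest such families are stars of vertices: for a vertex $v$, all $\binom{d}{2}$ ridges through $v$ agree with $v$, and so with each other, on their fixed positions. It therefore suffices to choose $d$ vertices whose stars cover every ridge, that is, so that every pair of coordinate positions realizes all four value-patterns among the chosen vertices—equivalently, a $d \times d$ binary matrix whose every pair of columns is surjective onto $\{0,1\}^2$. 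Such a matrix exists for $d \ge 6$, giving a $d$-set ridge cover with no antipodal peaks and so showing that $k(d)=d-4$ is sharp in that regime; in the low regime $2 \le d \le 4$ one need only avoid antipodal \emph{ridges}, for which coloring each ridge by its value in the smaller of its two fixed positions (padded to $d$ colors) already works. At $d=5$ a surjective $5\times 5$ matrix of this kind does not exist and no substitute consistency-based cover avoiding antipodal peaks on $C^5$ is known, so the sharpness of $k(5)=1$ remains open—precisely the single dimension where construction and bound fail to meet.
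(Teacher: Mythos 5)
Your existence argument has a genuine gap, and it sits exactly where you yourself locate the ``main obstacle'': the reduction from the projected $d$-set cover of $C^{d-1}$ to the $(d-1)$-set covers that your inductive hypothesis addresses. This step is not a technicality to be finessed; it \emph{is} the theorem. The statement you would need if the extra color could simply be absorbed --- that a $d$-set ridge cover of $C^{d-1}$ must have a self-antipodal set --- is false: on $C^3$ the boundaries of the four facets {\tt 0XX}, {\tt 1XX}, {\tt X0X}, {\tt X1X} cover all twelve edges, and no one of them contains a pair of antipodal vertices. Your proposed repair, playing the restricted facet covers of the top and bottom facets against each other ``invoking Theorem \ref{facet-cover},'' suffers the same arity mismatch: those restrictions are facet covers of a copy of $C^{d-1}$ by up to $d$ sets, whereas Theorem \ref{facet-cover} applies only to $(d-1)$-set facet covers of $C^{d-1}$. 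So no tool you cite performs the elimination, and no argument for it is given; you only assert that it works for $d\neq 5$ and fails at $d=5$. Compounding this, your formula needs the base value $k(5)\geq 1$, which cannot come from your recursion (that would give $k(5)\geq 2$, precisely the question the paper leaves open), and the ``direct, non-inductive argument'' you promise for it is never supplied. In the paper that bound is Theorem \ref{medium}, whose proof is a direct structural argument, not an induction: any cover set with at least $2(d-1)$ ridges and no antipodal pinnacles is forced, by pigeonhole on the $(d-3)$-antipodal pairs of ``purple'' ridges, to be exactly the boundary of a facet, and $d$ facet boundaries cannot cover the $2d(d-1)$ ridges of $C^d$. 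Nothing in your outline substitutes for that argument.

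Your sharpness half, by contrast, is essentially sound and genuinely different from the paper's. Your criterion for when two ridges span antipodal peaks is correct (it is the paper's Lemma \ref{2ridgesnopeaks}), as is the consequence that an asterisk set $Ast(v)$ never contains antipodal peaks. Where the paper covers $C^d$, $d\geq 6$, with only \emph{four} asterisk sets chosen by a block construction plus $\left\lceil \frac{d}{3}\right\rceil$ extra partition sets (Lemma \ref{upperbound}), you propose $d$ asterisk sets whose vertices form a binary strength-$2$ covering array: every pair of coordinate positions sees all four value patterns. This is cleaner, and your existence claims are true (by the Kleitman--Spencer/Katona bound, $N$ vertices support at most $\binom{N-1}{\lceil N/2\rceil}$ such coordinates, which is less than $N$ for $N=4,5$ and at least $N$ for $N\geq 6$), but you assert existence for $d\geq 6$ without proof. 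The gap is fillable with one line: identify each coordinate position $j$ with the set $S_j\subseteq\{1,\dots,d\}$ of chosen vertices whose $j$-th coordinate is {\tt 1}, and take the $S_j$ to be $d$ distinct $3$-element subsets all containing a common element (possible since $\binom{d-1}{2}\geq d$); they pairwise intersect, are incomparable, and their pairwise unions have size at most $5<d$, which is exactly the four-pattern condition. With that addendum, your construction gives a legitimately shorter proof of Theorem \ref{sharpness} for $d\geq 6$ than the paper's; the existence half, however, remains unproven as written.
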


As mentioned earlier, this theorem implies that in a $3$-set ridge cover of
$C^3$ at least one set will contain antipodal vertices (peaks).  It is
possible, however, for none of the sets to contain antipodal edges (ridges).
This theorem also says that in a $4$-set ridge cover of $C^4$ 
at least one set will contain antipodal edges (peaks).
Similarly, it is not necessarily true that one of the sets will
contain antipodal 2-faces (ridges).  (See Section \ref{sec:sharpness}.)

To prove the theorem, we break it up into a series of intermediate results which together imply Theorem \ref{complete} for $d \geq 3$.  (Theorem \ref{complete} is clearly true when $d=1,2$.)

\begin{theorem}[Weak cubical LSB]
\label{weak}
For $d \geq 3$, any $d$-set ridge cover of $C^d$ must have at least one set that contains
a pair of antipodal vertices.
\end{theorem}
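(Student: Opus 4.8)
The plan is to recast the statement as a covering problem for antipode-free sets of vertices, to settle $d=3$ by hand, and then to induct on the dimension with the facet-cover theorem (Theorem \ref{facet-cover}) supplying the bookkeeping. First I would reduce as follows. For each ridge choose one set of the cover that contains it, and for a vertex $v$ let $L(v)\subseteq\{1,\ldots,d\}$ be the set of indices $i$ for which some ridge through $v$ has been assigned to $A_i$. Since every vertex lies in $\binom{d}{2}\ge 1$ ridges, each $L(v)$ is nonempty, and the assertion that no $A_i$ contains a pair of antipodal vertices is precisely that $L(v)\cap L(\bar{v})=\emptyset$ for every $v$. Equivalently, writing $V_i$ for the set of vertices lying in some ridge of $A_i$, each $V_i$ is \emph{antipode-free} (it meets each antipodal pair at most once), while the covering hypothesis forces every ridge to have all $2^{d-2}$ of its vertices inside a single $V_i$. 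Thus it suffices to prove: for $d\ge 3$, there do not exist antipode-free vertex sets $V_1,\ldots,V_d$ of $C^d$ such that each ridge is contained in some $V_i$.

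For the base case $d=3$ the ridges are the edges, and the $V_i$ are antipode-free vertex sets in the $1$-skeleton of $C^3$. Here I would first observe that a $4$-vertex set induces at most $4$ edges, with equality exactly for the four-cycles, which in $C^3$ are precisely the $2$-faces (and these are themselves antipode-free). If three antipode-free sets covered all $12$ edges, then summing the numbers of induced edges shows each set must induce exactly $4$ edges and the three induced edge-sets must be pairwise disjoint; hence each $V_i$ is a facet and the three facets are pairwise edge-disjoint. But two facets of $C^3$ are edge-disjoint only when they are antipodal, and no three facets can be pairwise antipodal, a contradiction.

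For the inductive step I would project along the last coordinate, $\pi\colon\{0,1\}^d\to\{0,1\}^{d-1}$, and set $W_i=\{w : w\mathtt{0}\in V_i \text{ and } w\mathtt{1}\in V_i\}$. The antipodal coupling $w\mathtt{0}\in V_i \Rightarrow \overline{w}\mathtt{1}\notin V_i$ shows that each $W_i$ is antipode-free in $C^{d-1}$, while every \emph{spanning} ridge of $C^d$ (one whose last coordinate varies) projects onto a ridge of $C^{d-1}$; since such a ridge lies in some $V_i$, its image lies in the corresponding $W_i$. Hence $\{W_i\}$ cover the ridges of $C^{d-1}$. This already produces $d$ antipode-free covering sets on $C^{d-1}$, so the induction will close as soon as one of them can be discarded, reducing to $d-1$ sets and contradicting the inductive hypothesis.

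The hard part will be exactly this discarding step. Here I would bring in the bottom and top facets $F_0,F_1\cong C^{d-1}$: restricting the cover to each yields a $d$-set facet cover of $C^{d-1}$, with underlying vertex sets $U_i^0=\{w:w\mathtt{0}\in V_i\}$ and $U_i^1=\{w:w\mathtt{1}\in V_i\}$, so that $W_i=U_i^0\cap U_i^1$, and the coupling says $U_i^0$ is disjoint from the antipodal image of $U_i^1$. The aim is to combine Theorem \ref{facet-cover} with this coupling to exhibit a color $i$ whose set $W_i$ contains no ridge of $C^{d-1}$ and may therefore be dropped. I expect proving this redundancy to be the crux of the whole argument, and the precise point at which the threshold of Theorem \ref{weak} — one more set than the dimension — is actually pinned down.
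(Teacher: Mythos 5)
Your reformulation in terms of antipode-free vertex sets is valid, and your base case $d=3$ is correct and complete (the counting argument forcing each $V_i$ to be a $4$-cycle, hence a $2$-face, and the observation that three pairwise edge-disjoint $2$-faces of $C^3$ cannot exist is a clean variant of the paper's Lemma~\ref{weakC3}). But the inductive step has a genuine gap, and you say so yourself: the entire argument hinges on the ``discarding step'' --- exhibiting an index $i$ with $W_i$ containing no ridge of $C^{d-1}$ --- and you offer only the hope that Theorem~\ref{facet-cover} plus the coupling will supply it. This is not a minor bookkeeping issue. The configuration you arrive at after projecting, namely $d$ antipode-free vertex sets covering all ridges of $C^{d-1}$, is \emph{not} by itself contradictory: for instance, the vertex sets of the four $2$-faces {\tt XX0}, {\tt XX1}, {\tt X0X}, {\tt X1X} of $C^3$ are antipode-free and together contain every edge of $C^3$. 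So no amount of reasoning about the $W_i$ alone can close the induction; the discarding step must extract additional structure from the coupling with $U_i^0$ and $U_i^1$, and no such argument is given. Note also that your projection throws away all the non-spanning ridges (those lying inside the top and bottom facets of $C^d$), which is precisely the information a successful argument would need; and Theorem~\ref{facet-cover}, applied to the restriction of the cover to the bottom facet, does not help, since that restriction is a $d$-set facet cover of $C^{d-1}$, one more set than the theorem's hypothesis allows.

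For comparison, the paper avoids induction entirely. By pigeonhole some set $A_i$ contains at least $2(d-1)$ ridges; a direct classification of the ridges relative to one ridge of $A_i$ (the ``blue/red/orange/green/purple'' scheme) together with pigeonhole on antipodal pairs of ``purple'' ridges forces any such antipode-free set to consist of exactly the $2(d-1)$ ridges bounding a single facet; and then $d$ pairwise disjoint facet boundaries cannot exist once $d \geq 3$, since two disjoint ones must bound parallel facets and any third facet's boundary meets both. If you want to salvage your inductive framework, the statement you would need to prove in the discarding step is exactly where the real content lies, and it is likely to require an analysis comparable in difficulty to the paper's direct argument.
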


\begin{theorem}[Medium cubical LSB]
\label{medium}
For $d \geq 3$, any $d$-set ridge cover of $C^d$ must have at least one set that contains
a pair of antipodal $(d-4)$-faces.
\end{theorem}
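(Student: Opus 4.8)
The plan is to \emph{bootstrap} Medium cubical LSB from Weak cubical LSB (Theorem \ref{weak}), which I take as given. The case $d=3$ is trivial, since $(d-4)$-faces are then the empty face, which is its own antipode and lies in any nonempty $A_i$; so I assume $d \ge 4$. The one structural fact I would establish first is a clean description of when a face is contained in a union of ridges. Since each $A_i$ is a union of closed ridges, I claim that a $j$-face $G$ satisfies $G \subseteq A_i$ if and only if some ridge of $A_i$ contains $G$ as a subface. Indeed, a point in the relative interior of $G$ has exactly the fixed coordinates of $G$ taking values $0$ or $1$; for such a point to lie on a ridge $R$, the two fixed positions of $R$ must be among the fixed positions of $G$ with matching values, which forces $G \subseteq R$. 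As ridges meeting $G$ in strictly lower dimension cannot cover the relative interior of $G$, containment in $A_i$ is equivalent to containment in a single ridge of $A_i$. In particular, a vertex $v$ lies in $A_i$ exactly when some ridge through $v$ lies in $A_i$, and a $(d-4)$-face $F$ with four fixed positions $P$ lies in $A_i$ exactly when one of the $\binom{4}{2}=6$ ridges obtained by unfixing two of the coordinates in $P$ lies in $A_i$.

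With this in hand, the upgrade is short. First I would apply Theorem \ref{weak} to obtain a set $A_i$ containing a pair of antipodal vertices $v$ and $\bar v$. By the reformulation above there is a ridge $R \in A_i$ with $v \subseteq R$ and a ridge $R' \in A_i$ with $\bar v \subseteq R'$; write $\mathrm{fix}(R)$ and $\mathrm{fix}(R')$ for their (two-element) sets of fixed coordinate positions. Since $|\mathrm{fix}(R) \cup \mathrm{fix}(R')| \le 4 \le d$, I can choose a four-element set $P$ of positions with $\mathrm{fix}(R)\cup\mathrm{fix}(R') \subseteq P$, padding with arbitrary extra positions if the union is smaller than four. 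Define $F$ to be the $(d-4)$-face whose fixed positions are exactly $P$ and whose fixed values agree with $v$ on $P$; its antipode $\bar F$ then has the same fixed positions $P$ with values agreeing with $\bar v$, and $F \neq \bar F$ since they differ throughout $P$.

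It remains to check that $F$ and $\bar F$ both lie in $A_i$. Because $\mathrm{fix}(R) \subseteq P$ and $F$ agrees with $v$, hence with $R$, on these positions, we have $F \subseteq R \in A_i$, so $F \in A_i$. Symmetrically, $\mathrm{fix}(R') \subseteq P$ and $\bar F$ agrees with $\bar v$, hence with $R'$, on those positions, so $\bar F \subseteq R' \in A_i$ and $\bar F \in A_i$. Thus $A_i$ contains the antipodal pair $F,\bar F$ of $(d-4)$-faces, proving Theorem \ref{medium}. The genuinely hard content lives entirely in Theorem \ref{weak}; the only delicate points in this upgrade are the relative-interior characterization of containment in a union of ridges and the arithmetic that the two ridges' fixed positions together occupy at most four coordinates. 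That bound of $2+2=4$ is exactly why the conclusion lands at codimension $4$ rather than lower: if the fixed positions of $R$ and $R'$ are disjoint, no three-element $P$ can contain both, which is the obstruction one must confront when pushing beyond $(d-4)$-faces.
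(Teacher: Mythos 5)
Your proposal is correct, but it takes a genuinely different route from the paper. The paper does not bootstrap from Theorem \ref{weak} as a black box; instead it re-runs that theorem's entire coloring argument (blue/red/orange/green/purple ridges and the pigeonhole count), using Lemma \ref{pinnacleintersection} (any two intersecting ridges share a pinnacle, for $d \ge 4$) to upgrade each exclusion from vertex-antipodality to pinnacle-antipodality, and concludes as before that every cover set would have to bound a facet, which is impossible. Your argument short-circuits all of this: once Theorem \ref{weak} hands you antipodal vertices $v, \bar v$ in some $A_i$, they lie on ridges $R, R' \subseteq A_i$, and padding $\mathrm{fix}(R)\cup\mathrm{fix}(R')$ (at most $2+2=4$ positions) to a four-element set $P$ produces antipodal $(d-4)$-faces inside $R$ and $R'$ respectively; all the containment and antipodality checks you make are sound, and the $d=3$ case via the empty-face convention matches the paper's. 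What your approach buys is brevity, modularity, and transparency about why codimension $4$ is the natural landing point; in effect you prove the general fact that a union of ridges is $0$-self-antipodal if and only if it is $(d-4)$-self-antipodal. What the paper's longer route buys is a stronger structural statement along the way (if no set contains antipodal pinnacles, every large set must be a facet boundary), in the same spirit as its later analysis for Theorem \ref{strong}. It is also worth noting that your key step is equivalent to applying the paper's own Lemma \ref{pinnacleintersection} to $R$ and the antipode of $R'$: these two ridges both contain $v$, hence share a pinnacle $Q$, and then $Q \subseteq R \subseteq A_i$ while the antipode of $Q$ lies in $R' \subseteq A_i$ --- so your proof could be compressed even further using the machinery the paper already has.
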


\begin{theorem}[Strong cubical LSB]
\label{strong}
For $2 \leq d \leq 4$, any $d$-set 
ridge cover of $C^d$ must have at least one set that contains a
pair of antipodal $(d-3)$-faces.
\end{theorem}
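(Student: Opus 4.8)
The statement splits into three cases according to the dimension, and only $d=4$ carries real content. When $d=2$, a $(d-3)$-face is a $(-1)$-face, i.e.\ the empty set, which is its own antipode by convention, so the claim is vacuous. When $d=3$, a $(d-3)$-face is a vertex, and the assertion is exactly that every $3$-set ridge cover of $C^3$ has a set containing a pair of antipodal vertices---this is precisely Theorem~\ref{weak} in the case $d=3$. It therefore remains to treat $d=4$, where a $(d-3)$-face is a peak (an edge), and we must show that every $4$-set ridge cover of $C^4$ has a set containing a pair of antipodal peaks. It suffices to consider the extremal situation in which the cover is a partition of the $24$ ridges of $C^4$ into classes $A_1,\dots,A_4$, since assigning a ridge to additional classes only enlarges the collection of edges each class contains. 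Moreover, if some class contains an antipodal pair of ridges, then it contains their edges, which are antipodal in pairs, and we are done (in fact with antipodal ridges); so we may assume the partition separates all twelve antipodal pairs of ridges.

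I would organize the combinatorics by viewing $C^4$ as the prism $C^3\times[0,1]$. Each ridge is either a \emph{spanning ridge} $e\times[0,1]$ over an edge $e$ of $C^3$ ($12$ of these) or a \emph{horizontal ridge} $f\times\{0\}$ or $f\times\{1\}$ over a $2$-face $f$ of $C^3$ ($6+6$ of these); likewise each edge of $C^4$ is either \emph{vertical}, $v\times[0,1]$ over a vertex $v$ of $C^3$ ($8$ of these), or \emph{horizontal}, $e\times\{0\}$ or $e\times\{1\}$ ($12+12$). Since the antipode of $(x,t)$ is $(\bar x,1-t)$, the antipodal edge pairs are of exactly two kinds: \emph{vertical} pairs $\{v\times[0,1],\ \bar v\times[0,1]\}$, and \emph{horizontal} pairs $\{e\times\{0\},\ \bar e\times\{1\}\}$, where bars denote antipodes in $C^3$. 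A short check records the incidences: a vertical edge $v\times[0,1]$ lies only in the three spanning ridges over the edges of $C^3$ at $v$, whereas a horizontal edge $e\times\{0\}$ lies in the spanning ridge over $e$ together with the two bottom horizontal ridges over the $2$-faces of $C^3$ containing $e$. Translating ``no class contains antipodal edges'' through these incidences yields two families of constraints on the partition: a \emph{vertical} family $(V)$, asserting that for each antipodal vertex pair $\{v,\bar v\}$ of $C^3$ no class simultaneously carries a spanning ridge at $v$ and a spanning ridge at $\bar v$; and a \emph{horizontal} family $(H)$, coupling spanning and horizontal ridges so that no class produces both $e\times\{0\}$ and $\bar e\times\{1\}$.

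The plan is to assume $(V)$ and $(H)$ both hold and derive a contradiction. The vertical family by itself is \emph{not} enough: under the projection $C^4\to C^3$ the spanning ridges are exactly the $12$ edges of $C^3$, each receiving one color, and $(V)$ says precisely that this induced $4$-set ridge cover of $C^3$ has no class containing antipodal vertices---a condition that can be met, for instance by coloring each edge of $C^3$ according to the unique even-weight endpoint it meets, which sorts the edges into four ``claws'' whose vertex sets are antipode-free. Thus the contradiction must come from playing $(H)$ against $(V)$. Fixing attention on a pair of opposite vertices of $C^3$ and the spanning ridges meeting them, I would run a finite case analysis on the colors these ridges receive; using that $C^3$ has only four antipodal vertex pairs, that each vertex has degree three, and that every edge lies in two $2$-faces, the bookkeeping should force two ridges of a common color whose horizontal edges form an antipodal pair, contradicting $(H)$.

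The trivial cases $d=2,3$ aside, the whole difficulty is concentrated in this $d=4$ case analysis, and specifically in the interaction $(H)$ between spanning and horizontal ridges: the vertical constraints alone are satisfiable, so the argument must exploit how a single $2$-face of $C^3$ forces several of its edges into the same class and thereby collides with any antipode-free vertical coloring. (Sharpness of $k(d)=d-3$ for $2\le d\le 4$---that one need not find antipodal $(d-2)$-faces---is a separate construction in Section~\ref{sec:sharpness} and is not needed here.)
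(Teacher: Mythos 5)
Your reductions for $d=2$ and $d=3$ are correct and match the paper (trivial, and identical to Theorem~\ref{weak}, respectively), your reduction to partitions is legitimate, and your prism decomposition $C^4 = C^3\times[0,1]$ with the constraint families $(V)$ and $(H)$ is a sound framework --- including the genuinely useful observation that $(V)$ alone is satisfiable by the four ``claws'' centered at even-weight vertices, so any contradiction must exploit $(H)$. But at exactly the point where the theorem's content lives, your proof stops being a proof: ``I would run a finite case analysis,'' ``the bookkeeping should force two ridges of a common color whose horizontal edges form an antipodal pair.'' Nothing is actually derived. You have not shown that $(V)$ and $(H)$ together are contradictory; you have only asserted a plan for showing it, and the plan is vague about what is being cased on (the colors of the spanning ridges at one antipodal vertex pair do not obviously control the horizontal ridges, which carry $12$ of the $24$ ridges and all of the $(H)$ constraints).

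For comparison, the paper's proof of the $d=4$ case is a multi-step structural argument, not a direct case analysis: it first proves that any antipode-free class with at least $6$ ridges must be either the full ridge set of a single facet or an asterisk set $Ast(v)$ (Lemma~\ref{6ridgesperset}, which itself rests on Lemmas~\ref{2ridgesnopeaks}--\ref{3ridgesnopeaks}), then eliminates the facet possibility by projecting the $12$ spanning ridges onto $C^3$ and invoking Theorem~\ref{weak} with three sets (Lemma~\ref{nofacet4}), and finally shows that four pairwise disjoint asterisk sets cannot exist: disjointness forces any two asterisk vertices to differ in at least $3$ of the $4$ coordinates, so two of them must agree in at least $2$ coordinates and hence share a spanning ridge, a contradiction. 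Your proposal contains no analogue of this classification step, and without it (or a worked-out substitute), the $d=4$ case --- which you yourself identify as ``the whole difficulty'' --- remains unproved.
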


\begin{theorem}[The sharpness of $k(d)$]
\label{sharpness}
For all $d\neq 5$, there exists a $d$-set ridge cover of $C^d$ in which no
set contains a pair of antipodal $(k(d)+1)$-faces.  When $d=5$, there does not have to be a set in the cover that contains a pair of antipodal $(d-2)$-faces.
\end{theorem}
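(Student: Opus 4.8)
The plan is to prove every assertion constructively, by exhibiting an explicit $d$-set ridge cover in each dimension. Throughout I would use the elementary reduction that, for a face $F$ of dimension $m \le d-2$, one has $F \subseteq A_i$ if and only if some ridge containing $F$ lies in $A_i$ (a ridge not containing $F$ meets $F$ in a lower-dimensional face, and finitely many such cannot cover $F$). Consequently $A_i$ contains a pair of antipodal $m$-faces exactly when it contains a ridge of some $m$-face $P$ together with a ridge of its antipode $\bar P$. Since any partition of the ridges into $d$ classes is automatically a ridge cover, it suffices in each case to color the ridges with $d$ colors so that no color class realizes such a configuration.

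First I would dispose of the cases $1 \le d \le 5$, where $k(d)+1 = d-2$, so that the faces to be avoided are themselves ridges (for $d=5$ this is the weaker statement the theorem asserts, and for $d=1$ the claim is vacuous). Here the only obstruction is a pair $\{R,\bar R\}$ of position-antipodal ridges, and the map $R \mapsto \bar R$ is a fixed-point-free involution on the set of ridges. I would color each ridge by the coordinate value at its least fixed position; since antipodal ridges share their fixed positions but disagree in every one of them, $R$ and $\bar R$ always receive different colors, so no class contains an antipodal pair of ridges. Padding with empty classes yields a $d$-set cover; for $d=5$ this establishes precisely that no set need contain antipodal $(d-2)$-faces.

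The substantive case is $d \ge 6$, where I must avoid antipodal peaks, i.e.\ antipodal $(d-3)$-faces. I would reduce the construction to choosing, for every position $p$ and value $u\in\{0,1\}$, a color set $S_{p,u}\subseteq\{1,\dots,d\}$ satisfying (A) $S_{p,0}\cap S_{p,1}=\emptyset$ for each $p$, and (B) $S_{a,i}\cap S_{b,j}\ne\emptyset$ for all $a\ne b$ and all $i,j\in\{0,1\}$. Given these, I color each ridge with fixed positions $a<b$ and values $v_a,v_b$ by any color in $S_{a,v_a}\cap S_{b,v_b}$, which is nonempty by (B), giving a legitimate cover. To see that no class contains antipodal peaks, let $P$ be a peak with fixed positions $\{a,b,c\}$. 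The three ridges of $P$ and the three of $\bar P$ all use coordinate pairs drawn from $\{a,b,c\}$, and any two such pairs intersect; at a shared position a ridge of $P$ and a ridge of $\bar P$ carry opposite values, so their colors lie in the disjoint sets $S_{p,0}$ and $S_{p,1}$. Hence the colors of $P$'s ridges are disjoint from those of $\bar P$'s ridges, and no class can contain both $P$ and $\bar P$.

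It then remains to build the $S_{p,u}$, which I would do by taking each support $S_{p,1}$ to be one of $d$ distinct $3$-element subsets of $\{1,\dots,d\}$ sharing a common element, with $S_{p,0}$ its complement; this is possible exactly because $\binom{d-1}{2}\ge d$ for $d\ge 6$. Condition (A) is immediate, and (B) holds because such triples are pairwise incomparable (yielding the $(1,0)$ and $(0,1)$ intersections), share the common element (yielding $(1,1)$), and have union of size at most $5<d$ (yielding $(0,0)$). The hard part, and the reason for the case split, is precisely this last step: the system (A)--(B) amounts to finding $d$ pairwise qualitatively independent binary columns of length $d$, equivalently an intersecting, co-intersecting antichain of $d$ subsets of $\{1,\dots,d\}$. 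Such a family exists for $d\ge 6$ but not for $d\le 5$ — for example, the largest such family in $\{1,\dots,5\}$ has only four members — which is exactly why the peak-avoiding construction breaks down at $d=5$ and the theorem there claims only the weaker ridge-level statement.
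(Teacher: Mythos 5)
Your proof is correct, and for the main case $d \ge 6$ it takes a genuinely different route from the paper. For $d \le 5$ the two arguments coincide: antipodal ridges share their fixed positions, so splitting each antipodal pair between two sets (your rule of coloring by the value at the least fixed position is just an explicit way to do this) gives the required cover; your preliminary reduction (a face lies in a union of ridges iff it lies in a single ridge of that union) is used implicitly by the paper and is correctly justified. For $d \ge 6$, the paper's Lemma \ref{upperbound} covers the ridges by $4 + \left\lceil \frac{d}{3} \right\rceil$ peak-antipode-free sets, namely four asterisk sets $Ast(v_i)$ at vertices chosen by a block construction to be pairwise far apart, plus $\left\lceil \frac{d}{3} \right\rceil$ extra sets absorbing the leftover ridges; sharpness then follows from $4 + \left\lceil \frac{d}{3} \right\rceil \le d$ exactly when $d \ge 6$. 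Your construction instead partitions all ridges into $d$ classes at once. Since your $S_{p,0}$ and $S_{p,1}$ are complementary, each color $i$ determines a full vertex $v_i$ (with $v_i(p)=1$ iff $i \in S_{p,1}$), your class $i$ is contained in $Ast(v_i)$, and your condition (B) says precisely that the $d$ vertices form a binary covering array of strength two: every pair of coordinate positions exhibits all four value patterns (pairwise qualitative independence of the columns). So your route amounts to covering the ridges by $d$ asterisk sets, with existence for $d \ge 6$ supplied by your explicit family of $3$-subsets through a common point (the union bound $3+3-1=5<d$ being exactly where $d \ge 6$ enters); your condition-(A) argument that each class avoids antipodal peaks is the same reason asterisk sets avoid them, namely that ridges in one class agree at every common fixed position. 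Each approach buys something: the paper's hybrid construction proves the stronger quantitative fact that only $4 + \left\lceil \frac{d}{3} \right\rceil$ such sets are needed (relevant to its discussion of minimal covers), whereas yours is more uniform, stays within exactly $d$ sets, and isolates the obstruction at $d=5$ cleanly --- your side remark that no five pairwise qualitatively independent subsets of a $5$-set exist is correct (this is the Kleitman--Spencer bound), which is consistent with, though of course it does not resolve, the paper's open case $d=5$.
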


We prove each of these intermediate results in its own section. As an
introduction to our methods, we first prove the simple statement about
facet covers mentioned earlier.

\section{A cubical LSB theorem for facet covers}

For a facet cover of $C^d$, the classical LSB theorem guarantees the existence of a set in the cover that contains a pair of antipodal points on the facets of $C^d$.  We strengthen this result with the following theorem.

\begin{theorem} \label{facet-cover}
For all $d \geq 1$, a $d$-set facet cover of $C^d$ has a set that contains a
pair of antipodal $(d-2)$-faces, i.e., antipodal ridges.  The dimension $(d-2)$ is sharp; antipodal faces of larger dimension cannot be guaranteed.
\end{theorem}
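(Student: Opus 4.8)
The plan is to reduce the guarantee to a single combinatorial observation and then invoke pigeonhole. Writing $(p{:}a)$ for the facet obtained by fixing coordinate position $p$ to the value $a\in\{0,1\}$, and $\bar a=1-a$, I first pin down the operative convention: a union of facets $A$ \emph{contains} a ridge $R$ exactly when $R$ is a subface of some facet lying in $A$. (This is the combinatorial reading, and it matches the geometric one, since a relative-interior point of $R$ lies in a facet $F$ only if $R\subseteq F$.) Each ridge $(p{:}a,\,q{:}b)$ with $p\neq q$ is a subface of precisely the two facets $(p{:}a)$ and $(q{:}b)$, a fact I will use repeatedly.

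The key lemma, valid for $d\geq 2$, is that if $A$ contains two distinct facets then $A$ already contains a pair of antipodal ridges. I would prove this by cases on the two facets $(p{:}a)$ and $(q{:}b)$. If $p=q$, then $b=\bar a$, and choosing any third position $q'$, the ridge $(p{:}a,\,q'{:}0)$ lies in $(p{:}a)$ while its antipode $(p{:}\bar a,\,q'{:}1)$ lies in $(p{:}\bar a)$. If $p\neq q$, then $(p{:}a,\,q{:}\bar b)$ lies in $(p{:}a)$ while its antipode $(p{:}\bar a,\,q{:}b)$ lies in $(q{:}b)$. In either case both ridges belong to $A$. Since a $d$-set facet cover assigns the $2d$ facets of $C^d$ to only $d$ sets, some set must contain at least two facets, and the lemma then forces antipodal ridges in that set. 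The remaining case $d=1$ is degenerate: ridges are the empty face, and the stated guarantee holds vacuously by the convention that the empty set is its own antipode.

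For the sharpness claim I would construct, for each $d\geq 2$, a facet cover no set of which contains a pair of antipodal facets; since a $(d-1)$-face is exactly a facet, this shows antipodal $(d-1)$-faces cannot be guaranteed and hence that $d-2$ is best possible. Indexing both positions and sets by $\mathbb{Z}/d$, I assign $(p{:}0)$ to set $p$ and $(p{:}1)$ to set $p+1$. Then set $s$ receives the two facets $(s{:}0)$ and $(s-1{:}1)$, whose fixed positions $s$ and $s-1$ are distinct (as $d\geq 2$), so no set contains an antipodal pair of facets, while every facet is covered exactly once.

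The only step that demands genuine care is the key lemma together with the containment convention: one must verify that the exhibited pairs of ridges really lie in $A$ and are genuinely antipodal, and keep track of the degenerate behavior at $d=1$ (where the sharpness assertion in fact does not apply, since the two facets of $C^1$ are antipodal vertices forced into the single set). Everything else — the pigeonhole count and the cyclic sharpness construction — is routine.
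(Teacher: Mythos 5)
Your proposal is correct and follows essentially the same route as the paper: pigeonhole on the $2d$ facets to find a set with two facets, an argument that any two facets in one set force a pair of antipodal ridges, and the identical cyclic construction $\bigl((i{:}0),(i{+}1{:}1)\bigr)$ for sharpness. Your explicit two-case lemma and your observation that the sharpness assertion degenerates at $d=1$ (where the paper's construction would place the two antipodal vertices of $C^1$ in one set) are slight refinements of the paper's presentation, not a different approach.
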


\begin{proof}
Recall that $C^d$ has $2d$ facets, which are covered by the $d$ sets of the facet cover.  Therefore, some set $A_i$ contains at least two facets.

Without loss of generality, suppose the facet ${\tt X...X0}$ is contained in $A_i$. Thus, $A_i$ covers all ridges of ${\tt X...X0}$, and to avoid containing antipodal ridges, it must not cover any ridges of ${\tt X...X1}$. Clearly, then, ${\tt X...X1}$ cannot lie in $A_i$, which implies that any other facet in the set must have its fixed coordinate among the first $d-1$ coordinates.  Hence the intersection of this facet and ${\tt X...X1}$ is non-empty and must be a ridge.  Then $A_i$ contains ${\tt X...X0}$ and a ridge in ${\tt X...X1}$, which must be the antipode of a ridge in ${\tt X...X0}$.  Therefore $A_i$ contains a pair of antipodal ridges.

For the sharpness of $(d-2)$, there is a $d$-set facet cover of $C^d$ that does not contain a pair of antipodal facets.  In particular, consider the facet cover in which $A_i$ contains two facets, one with the $i$-th coordinate fixed at ${\tt 0}$ and the other with the $(i+1)$-th coordinate fixed at ${\tt 1}$ (if $i=d$, then interpret the $(i+1)$-st as the $1$-st coordinate).  Clearly every facet is in some set of this cover, but no set of this cover contains antipodal facets.
\end{proof}

We now turn our attention to ridge covers.

\section{The weak cubical LSB theorem}

We prove the weak cubical LSB theorem for the case of the 3-cube, and then use the framework from that proof to argue the general case.

\begin{figure}[htb]
\begin{center}
\includegraphics[height=2in]{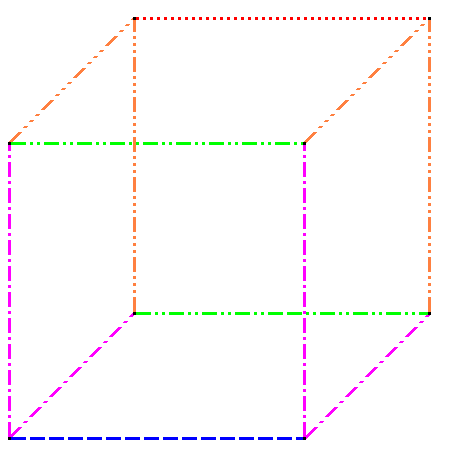}
\end{center}
\caption{$C^3$ with edges ``colored'' according to the following
  legend: 
red=dotted, 
orange=dash-dot-dot-dot,
green=dash-dot-dot,
purple=dash-dot, 
blue=dashed.
}
\label{3cubeweak}
\end{figure}

\begin{lemma}[Weak cubical LSB theorem on $C^3$]
\label{weakC3}
Any $3$-set ridge cover of $C^3$ must have at least one set that contains a pair of
antipodal vertices.
\end{lemma}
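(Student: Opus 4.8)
The plan is to prove Lemma \ref{weakC3} by a direct combinatorial case analysis on the ridges (edges) of $C^3$, exploiting the pigeonhole principle and the rigid structure of antipodal relationships. Recall that $C^3$ has $2^{3-2}\binom{3}{2} = 12$ edges (ridges), and we are covering them with three sets $A_1, A_2, A_3$. Since $12$ edges are distributed among $3$ sets, by pigeonhole at least one set — say $A_1$ — must contain at least $4$ edges. The goal is to show that some set contains a pair of antipodal vertices, i.e., vertices that differ in all three coordinates (such as {\tt000} and {\tt111}).

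The key observation I would develop is a bookkeeping device linking edges to the vertices they contain: each edge of $C^3$ is a {\tt1}-face with coordinate representation having one {\tt X} and two fixed coordinates, so it contains exactly two vertices. An edge $e \in A_i$ ``covers'' its two endpoint vertices in the sense that those vertices belong to $A_i$. A set $A_i$ contains antipodal vertices precisely when the collection of vertices appearing as endpoints of edges in $A_i$ includes some vertex and its antipode. The heart of the argument is to show that no assignment of the $12$ edges to three sets can simultaneously avoid, in every set, having both a vertex and its antipodal partner among the covered endpoints. I would set this up by examining the four antipodal vertex-pairs of $C^3$ — namely $\{\texttt{000},\texttt{111}\}$, $\{\texttt{001},\texttt{110}\}$, $\{\texttt{010},\texttt{101}\}$, and $\{\texttt{100},\texttt{011}\}$ — and tracking which set each vertex's incident edges are forced into.

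The main obstacle, I expect, is organizing the case analysis efficiently rather than enumerating all colorings of $12$ edges by hand. I anticipate the cleanest route is an extremal/counting argument: suppose for contradiction that no set is self-antipodal at the vertex level. Then for each of the four antipodal vertex-pairs, the two vertices cannot both be ``hit'' within a single set. Since each vertex has $3$ incident edges (as $d=3$ edges meet at each vertex), I would count incidences and derive that the constraint forces too few edges to be coverable, contradicting the fact that all $12$ edges must be covered. Concretely, I would argue that the avoidance condition severely limits how edges incident to antipodal vertices can be distributed, and that the pigeonhole count ($4$ edges in some set) combined with the geometry of $C^3$ makes a monochromatic antipodal vertex-pair unavoidable.

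An alternative and perhaps more transparent approach, which I would consider as a fallback, is to reason through the three ``directions'' of edges: the $12$ edges split into three parallel classes of $4$ edges each (those varying in coordinate $1$, $2$, or $3$). I would analyze how a single set's edges project onto the vertex structure, using the symmetry of $C^3$ to reduce the number of essentially distinct configurations. The payoff of embedding the proof in this framework is that it should generalize: the stated intent is to ``use the framework from that proof to argue the general case,'' so I would prioritize a proof whose counting and antipodal-pairing logic extends naturally from $C^3$ to $C^d$, rather than a purely ad hoc enumeration specific to dimension three.
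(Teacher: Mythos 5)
Your setup is sound, and your two opening observations are correct: some set must contain at least four of the twelve edges, and if no set contains a pair of antipodal vertices then, for each of the four antipodal vertex pairs $\{v,\bar v\}$, the sets containing an edge incident to $v$ must be disjoint from the sets containing an edge incident to $\bar v$. But the proof stops exactly where the work begins. The entire content of the lemma is compressed into the sentence ``I would count incidences and derive that the constraint forces too few edges to be coverable,'' and that derivation is never performed; ``severely limits'' and ``makes a monochromatic antipodal vertex-pair unavoidable'' are conclusions you would need to prove, not steps of an argument. As submitted, this is a strategy rather than a proof. (For comparison, the paper carries out its corresponding hard step in full: it fixes an edge of a set with at least four edges, classifies every other edge relative to it, uses the pigeonhole principle on the antipodal pairs of ``purple'' edges to force a ``green'' edge, concludes that the set must be exactly the boundary of a $2$-face, and then checks that three face boundaries cannot cover all twelve edges.)

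The gap is fillable, and along the lines you indicate, so the approach itself is not wrong --- but the missing step is the whole proof, so let me record it. Since the three edges at $v$ must all be covered, the collection of sets hitting $v$ is nonempty, and likewise for $\bar v$; disjointness of these two collections inside a family of only three sets forces one of them to be a single set. Hence for each antipodal pair, some vertex $u$ has all three of its incident edges inside one set $A_i$. Then $A_i$ contains $u$ and its three neighbors, so by the avoidance hypothesis $A_i$ misses $\bar u$ and the three neighbors of $\bar u$; since these eight vertices exhaust $C^3$, every edge of $A_i$ has both endpoints among $u$ and its neighbors, and the only such edges are the three edges at $u$. So $A_i$ is exactly the star of edges at $u$. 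The four antipodal pairs yield four distinct such vertices, and two of them must land in the same set by pigeonhole, forcing two distinct vertices to have identical stars --- impossible. This completed argument is genuinely different from the paper's: it never establishes that each set bounds a $2$-face (the paper's Lemma \ref{C3edge}, which the paper records precisely because it is reused later), and its star-based bookkeeping is indeed the kind of thing one could try to push to $C^d$. But none of this is in your text; you must actually write the three steps above for the proposal to count as a proof.
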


\begin{proof}
Note that on $C^3$, a ridge is an edge. We prove the statement by showing that if no cover sets contain a pair of 0-antipodal edges, then each cover set must be the boundary of a 2-face.  Then, we show that three sets of this kind cannot cover the edges of the 3-cube.

Since $C^3$ contains twelve edges, there must be a cover set which contains at least four edges.  Let this set be $A_i$.  Assume, without loss of generality, that {\tt X00} (the blue edge in Figure \ref{3cubeweak}) is in $A_i$.  This immediately eliminates the red edge {\tt X11} and the orange edges {\tt0X1}, {\tt1X1}, {\tt01X}, and {\tt11X} as potential members of $A_i$, since they are all 0-antipodal to the blue edge.  (The red edge is actually 1-antipodal to the blue edge.)  This leaves us the four purple edges {\tt0X0}, {\tt00X}, {\tt1X0}, and {\tt10X}, each of which shares a vertex with the blue edge, and the two green edges {\tt X01} and {\tt X10}, which are both parallel (and not at all antipodal) to the blue edge.

Suppose that three of the remaining edges of $A_i$ were purple.  But there are two pairs of purple edges which are 0-antipodal ({\tt00X} and {\tt1X0} form one, {\tt0X0} and {\tt10X} form the other).  The pigeonhole principle guarantees that at least one of the pairs will have both members chosen.  This contradicts the condition that $A_i$ not be 0-self-antipodal.

Therefore, at least one of the edges in $A_i$ is a green edge.  In fact, it can only be one green edge since the two green edges are antipodal; thus, choosing one necessarily excludes the other.  Further, once we have chosen a green edge, the two purple edges which do not meet it are also excluded, since they must instead meet the other green edge, and the two green edges are 1-antipodal.

This implies that $A_i$ must contain four edges which bound a face.  If it were to contain any more edges, it would be 0-self-antipodal (as we have shown by elimination), so it must contain exactly these four edges.

Since $A_i$ was arbitrary (up to the fact that it contained four edges), it is clear that \textit{each} set $A_1,A_2,A_3$ must contain exactly four edges, and each must be the boundary of a face.  Hence no two sets can cover the same edge; otherwise some other edge must not be covered. In particular, $A_1$ and $A_2$ are disjoint and therefore must bound parallel faces of $C^3$. The boundary of any other face must share an edge with each of $A_1$ and $A_2$, contradicting the fact that $A_3$ must be disjoint from $A_1$ and $A_2$. Thus, it is impossible to cover the edges (ridges) of $C^3$ with three cover sets such that no set is 0-self-antipodal.
\end{proof}

In this proof, we established a fact that will be useful later, so we record it here:

\begin{lemma} \label{C3edge}
Let $A$ be a collection of at least four edges in $C^3$ with no antipodal vertices. Then $A$ must bound a 2-face of $C^3$.
\end{lemma}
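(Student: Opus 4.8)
The plan is to isolate the local case analysis already carried out in the proof of Lemma \ref{weakC3} and present it as a self-contained statement about a single edge collection. The guiding principle is that fixing one edge of $A$, together with the no-antipodal-vertices hypothesis, confines every other edge of $A$ to a short list, after which a pigeonhole count rigidly forces $A$ to be the boundary of a single 2-face.

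First I would use the edge-transitivity of the symmetry group of $C^3$ to assume, without loss of generality, that the edge {\tt X00} lies in $A$. Its vertices {\tt 000} and {\tt 100} have antipodes {\tt 111} and {\tt 011}, so the hypothesis forbids any edge of $A$ from containing {\tt 111} or {\tt 011}. This eliminates the five edges {\tt X11}, {\tt 1X1}, {\tt 11X}, {\tt 0X1}, and {\tt 01X}, leaving six candidates. These fall into two types: the four \emph{vertex-sharing} edges {\tt 0X0}, {\tt 00X}, {\tt 1X0}, {\tt 10X} (each meeting {\tt X00} in a vertex) and the two \emph{parallel} edges {\tt X01}, {\tt X10}.

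Next I would run two short counting steps. Among the four vertex-sharing edges there are exactly two antipodal pairs, namely ({\tt 00X}, {\tt 1X0}) and ({\tt 0X0}, {\tt 10X}), so by pigeonhole $A$ can contain at most one edge from each pair, i.e.\ at most two vertex-sharing edges in total. Since $|A| \ge 4$ and only {\tt X00} is accounted for so far, $A$ must use at least one parallel edge; but {\tt X01} and {\tt X10} are themselves antipodal, so $A$ contains exactly one, say {\tt X01} (the case {\tt X10} being symmetric). Testing antipodality against {\tt X01} then rules out {\tt 0X0} and {\tt 1X0}, leaving only {\tt 00X} and {\tt 10X}, which are mutually compatible. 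Hence $A$ is contained in the four edges {\tt X00}, {\tt X01}, {\tt 00X}, {\tt 10X}, and these are precisely the edges bounding the 2-face {\tt X0X}. As $|A| \ge 4$, equality holds and $A$ bounds that face.

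The step demanding the most care is the interplay between the two counts: one must verify not only that the pigeonhole argument forces a parallel edge, but that committing to one parallel edge then deletes exactly the two vertex-sharing edges incompatible with it and retains exactly the complementary pair needed to close up a 2-face. This is where the conclusion could a priori fail---if the survivors were a different pair, they might not bound a face---so the bookkeeping, though routine, must be completed in full.
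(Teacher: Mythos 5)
Your proof is correct and follows essentially the same route as the paper's: the paper establishes this lemma inside the proof of Lemma~\ref{weakC3}, with the identical steps---normalize to the edge {\tt X00}, eliminate the five edges hitting {\tt 011} or {\tt 111}, pigeonhole on the two antipodal pairs among the four vertex-sharing edges to force exactly one parallel edge, then discard the two vertex-sharing edges antipodal to it. Your version merely makes the final bookkeeping (identifying the surviving pair and the face {\tt X0X}) more explicit than the paper's phrasing.
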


We now move on to the case for $C^d$, which we prove in much the same way as we did the weak cubical LSB theorem on $C^3$. In particular, the colorings of ridges are analogous.

\begin{theorem*}{\ref{weak}}{The weak cubical LSB theorem on $C^d$}
For $d \geq 3$, any $d$-set ridge cover of $C^d$ must have a set that contains
a pair of antipodal vertices.
\end{theorem*}

\begin{proof}
Again, we prove the statement by showing that if no cover sets contain a pair of antipodal points, then each cover set must be the boundary of a facet.  Then, we show that $d$ sets of this kind cannot cover the ridges of $C^d$.

We know that on average, the $d$ sets of ridges covering the
$2^{d-(d-2)}{d \choose {d-2}}=2d(d-1)$ ridges must contain at least
$2(d-1)$ ridges each.  Thus, there must be a set containing at least
$2(d-1)$ ridges.  Let this set be $A_i$.  Assume, without loss of
generality, that $A_i$ contains the ridge {\tt X...X00}.  We call this
the blue ridge.  Then $A_i$ cannot contain the ridge {\tt X...X11}
(the red ridge, which is antipodal to the blue ridge), as well as any
ridge with which it intersects (the orange ridges).

Fortunately, our notation affords us an easy way to determine which ridges are 0-antipodal to the blue ridge.  Antipodal vertices differ in every coordinate.  Thus, if a ridge is to be 0-antipodal to the blue ridge, it must contain a vertex with a {\tt1} in both of the last two coordinates; this ridge is either the unique red ridge or one of the orange ridges.  Therefore, a ridge will be antipodal to the blue ridge if it is of the form {\tt...XX}, {\tt...X1}, {\tt...1X}, or {\tt...11}.  (The first three types determine orange ridges, and the last type determines the red ridge.)

Two ridges are parallel if they vary in all the same coordinates, and keep the same coordinates constant.  Then there are three ridges parallel to the blue ridge (besides itself): {\tt X...X01}, {\tt X...X10}, and {\tt X...X11}.  The last is the red ridge, but the others are not at all antipodal to the blue ridge; these, then, are the green ridges.  Note that there are always exactly two of them.

The ridges that are not 0-antipodal to the blue ridge share vertices with the blue ridge and with a green ridge.  This is because they must be of the form {\tt...X0} or {\tt...0X} (since all the other possible ending patterns have already been classified).  These are the purple ridges.  There are $4(d-2)=4d-8$ of them.

We are now equipped to show that $A_i$ must be the boundary of a
facet.  Recall that $A_i$ contains the blue ridge and at least $2d-3$
other ridges. In a completely analogous fashion to the previous proof,
suppose first that at least $2d-3$ of these other ridges were purple.
But the purple ridges come in 0-antipodal pairs.  In general, given a
purple ridge, a 0-antipodal purple ridge can be found by switching the
last two coordinates and reversing the value of the other fixed
coordinate.  For example, the ridge {\tt0X...X0} is paired with
{\tt1X...X0X}.  Again by the pigeonhole principle, since there are
$2d-4$ such pairs of purple ridges and we are attempting to choose
$2d-3$ purple ridges, at least one pair must have both members chosen.
This contradicts the condition that $A_i$ not be 0-self-antipodal.

Therefore, at least one of the ridges in $A_i$ is a green ridge.
Again, it can only be one green ridge, since the two green ridges are
antipodal, so that choosing one necessarily excludes the other.
Further, once we have chosen a green ridge, any purple ridge meeting
the other green ridge is also excluded, as the green ridges are
antipodal to each other.  (Any purple ridge meets exactly one of the green
ridges; a purple ridge of the form {\tt...X0} will meet the green
ridge {\tt X...X10}, while a purple ridge of the form {\tt...0X} will
meet the green ridge {\tt X...X01}.)

Now $A_i$ may only contain the blue ridge, one green ridge and the half of the purple ridges which meet this green ridge.  These determine the boundary of a facet: for example, if we choose the green ridge {\tt X...X10}, we would be left with the purple ridges of the form {\tt...0X}, so that the facet spanned would be {\tt X...X0}.  Note that $A_i$ cannot contain any more ridges, else $A_i$ would be 0-self-antipodal (as we have shown by elimination).

Since $A_i$ was arbitrary (up to the fact that it contained at least $2(d-1)$ ridges), it is clear that \textit{each} set $A_1,...,A_d$ must contain exactly $2(d-1)$ ridges, and each must bound a facet. Hence no two sets can cover the same ridge; otherwise some other ridge must not be covered. In particular, $A_1$ and $A_2$ are disjoint and therefore must bound parallel facets of $C^d$. Since $d \geq 3$, there exists another cover set. But the boundary of any other facet must share a ridge with each of $A_1$ and $A_2$, contradicting the fact that all cover sets must be disjoint. Thus, it is impossible to cover the ridges of $C^d$ with $d$ cover sets such that no set is 0-self-antipodal.
\end{proof}

\section{The medium cubical LSB theorem}

In order to prove the medium cubical LSB theorem, we combine the 
weak cubical LSB theorem with the following lemma.

\begin{lemma}
\label{pinnacleintersection}
For $d \geq 4$, any two ridges of $C^d$ that intersect each other
contain a pinnacle in their intersection.
\end{lemma}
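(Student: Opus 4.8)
The plan is to argue entirely in coordinate representations, tracking the fixed coordinate positions of the intersection. A ridge of $C^d$ is a $(d-2)$-face, so its coordinate representation carries exactly two fixed positions (and $d-2$ positions marked {\tt X}), while a pinnacle is a $(d-4)$-face, with exactly four fixed positions. Since the intersection of two faces of $C^d$ is again a face, and a face of dimension $m$ contains a pinnacle precisely when $m \geq d-4$, the lemma reduces to showing that two intersecting ridges meet in a face of dimension at least $d-4$.

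The key step is to read off the intersection of two faces from their coordinate representations. For faces $F_1$ and $F_2$, I would compare position by position: a position marked {\tt X} in both representations stays {\tt X} in the intersection; a position fixed in exactly one of them becomes fixed (to that value) in the intersection; and a position fixed in both must carry the \emph{same} value in both for the intersection to be nonempty---otherwise the two faces lie in disjoint parallel slabs and cannot meet---in which case the intersection is fixed to that common value. Hence, whenever $F_1 \cap F_2$ is nonempty, its set of fixed positions is exactly the union of the fixed positions of $F_1$ and of $F_2$.

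Applying this to two intersecting ridges $R_1$ and $R_2$, each contributes exactly two fixed positions, so their union has size at most $2+2=4$. Therefore $R_1 \cap R_2$ has at most four fixed positions, hence at least $d-4$ varying positions, so it is a face of dimension at least $d-4$; since $d \geq 4$ this dimension is nonnegative, and the face contains a pinnacle (it equals one when its dimension is exactly $d-4$, and otherwise a pinnacle is obtained by fixing any additional varying coordinates). I do not anticipate a serious obstacle here; the only point demanding care is the nonemptiness bookkeeping in the key step---confirming that intersecting ridges must agree on every shared fixed coordinate, so that the union-of-fixed-positions description is valid and the bound of at most four fixed positions is justified.
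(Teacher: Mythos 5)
Your proof is correct, but it takes a genuinely different route from the paper's. The paper localizes at a shared vertex: since a ridge is a $(d-2)$-cube, each of the two ridges accounts for $d-2$ of the $d$ cube edges meeting at that vertex, so they must share at least $(d-2)+(d-2)-d = d-4$ of those edges, and these shared edges span a pinnacle lying in both ridges. You instead compute the intersection globally by coordinate calculus: a nonempty intersection of faces is itself a face whose fixed positions are exactly the union of the two fixed-position sets (with the consistency condition on commonly fixed positions governing nonemptiness), so two intersecting ridges meet in a face of codimension at most $4$, i.e., of dimension at least $d-4$. Your route buys a slightly stronger and more explicit conclusion---the intersection is itself a face of dimension at least $d-4$ whose coordinate representation you can write down---and your intermediate fact about face intersections is exactly the kind of calculus the paper uses informally elsewhere (for instance in the proof of Theorem \ref{facet-cover}, where the intersection of two facets with distinct fixed positions is identified as a ridge). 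The paper's argument is shorter given its stated fact that $d$ edges meet at each vertex of $C^d$, and it sidesteps the nonemptiness bookkeeping entirely, since the hypothesis of intersection is used only to produce a shared vertex. Both proofs ultimately rest on the same inclusion-exclusion count, read off in dual ways: shared varying coordinates in the paper versus united fixed coordinates in your version.
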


\begin{proof}
A ridge of $C^d$ is a $(d-2)$-cube, which has $d-2$ edges meeting at
any vertex.  If two ridges of $C^d$ intersect, they share at least a
vertex. But there are only $d$ edges that meet at any vertex of $C^d$,
so the ridges must share at least $d-4$ edges. This implies that they
share a pinnacle spanned by these edges.
\end{proof}

The medium cubical LSB theorem now follows readily.

\begin{theorem*}{\ref{medium}}{Medium cubical LSB theorem}
For $d \geq 3$, any $d$-set ridge cover of $C^d$ must have at least one set that contains
a pair of antipodal $(d-4)$-faces.
\end{theorem*}

\begin{proof}
The medium cubical LSB theorem is trivial on $C^3$, and is equivalent to the weak cubical LSB theorem on $C^4$.  For $d \geq 5$, we parallel the proof of the weak cubical LSB theorem by attempting to construct a ridge cover such that no cover set contains antipodal pinnacles, and obtain a contradiction.

In that proof, we showed that if the blue ridge is covered by some set $A_i$ that contains at least $2(d-1)$ ridges, the red ridge and orange ridges could not be contained in $A_i$.  The same is true here; the red ridge is antipodal to the blue ridge, and by Lemma \ref{pinnacleintersection}, every orange ridge must actually share a pinnacle with the red ridge and must therefore be $(d-4)$-antipodal to the blue ridge.

What remain to be considered are the 2 green ridges and the $4d-8$ purple ridges.  Each of the $2d-4$ pairs of purple ridges constructed in the previous proof is actually $(d-3)$-antipodal, and therefore at most one of each pair can be in $A_i$.  Since $A_i$ contains at least $2d-2$ ridges and one is the blue ridge, then as before, by the pigeonhole principle, the $2d-3$ remaining ridges of $A_i$ cannot all be purple.  Thus, there must be a green ridge, and since the two green ridges are $(d-2)$-antipodal, there must be exactly one green ridge in $A_i$.  The purple ridges each share a peak with a green ridge, so they are $(d-3)$-antipodal to the other green ridge.  This means that once a green ridge has been chosen, all the purple ridges meeting the other green ridge cannot belong to $A_i$. Therefore, $A_i$ contains exactly $2d-2$ ridges, and they bound a facet.

By the same reasoning as in the proof of Theorem \ref{weak}, each cover set must bound a facet and $d$ such sets cannot cover all the ridges of $C^d$, a contradiction.
\end{proof}

\section{The strong cubical LSB theorem}

Although the medium cubical LSB theorem (Theorem \ref{medium})
holds for all $d \geq 3$, we can strengthen our result for $d=4$.

\begin{theorem*}{\ref{strong}}{Strong cubical LSB theorem}
For $2 \leq d \leq 4$, any $d$-set ridge cover of $C^d$ must have at least one 
set that contains a pair of antipodal $(d-3)$-faces.
\end{theorem*}

\medskip

For $d=2$, a peak is simply the empty set, so
Theorem \ref{strong} is trivial. For $d=3$, Theorem \ref{strong}
reduces to Theorem \ref{weak}, proved already. For $d=4$, however,
Theorem \ref{strong} says that a $4$-set ridge cover of $C^4$ has a
set containing a pair of antipodal edges.
This is a stronger statement than Theorem \ref{medium}, which
only guarantees a set containing a pair of antipodal vertices.

Recall some terminology. We refer to the facet {\tt XXX0} as the bottom  facet and the facet {\tt XXX1} as the top facet. We also refer to the ridges which intersect both the top and bottom facets as \textit{spanning ridges}. The spanning ridges are those with an {\tt X} in the last coordinate. 
We begin with a few lemmas.

\begin{lemma}
\label{2ridgesnopeaks}
If two ridges in $C^d$ do not contain a pair of antipodal peaks, then there must be a coordinate position that both ridges fix at the same coordinate value, or the two coordinate positions that each ridge fixes must be disjoint.
\end{lemma}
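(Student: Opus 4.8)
The plan is to prove the contrapositive: assuming both alternatives in the conclusion fail, I will exhibit an explicit pair of antipodal peaks. First I would set up the bookkeeping. A ridge of $C^d$ is a $(d-2)$-face, so in coordinate representation it fixes exactly two positions; write $R_1$ for the ridge fixing a position-set $S_1$ (with $|S_1|=2$) at prescribed values, and $R_2$ for the ridge fixing $S_2$ (with $|S_2|=2$). A peak of $R_1$ is obtained by additionally fixing one of its varying coordinates, so it fixes $S_1 \cup \{c\}$ for some $c \notin S_1$, with the two inherited values plus one chosen value; likewise for $R_2$. Two peaks are antipodal precisely when they fix the same three positions but disagree in all three fixed values. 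Negating the conclusion means $S_1 \cap S_2 \neq \emptyset$ (the position sets are not disjoint) and no shared position is fixed at a common value, so on every position of $S_1 \cap S_2$ the ridges $R_1$ and $R_2$ take opposite values. Under these hypotheses the goal is to produce antipodal peaks.

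I would then split on $|S_1 \cap S_2|$, which is either $2$ or $1$. If $|S_1 \cap S_2|=2$ then $S_1=S_2$ and, since the ridges disagree on both shared positions, $R_1$ and $R_2$ are antipodal ridges; here, fixing any common varying coordinate $c$ at value $w$ in $R_1$ and at value $\bar w$ in $R_2$ immediately yields a pair of antipodal peaks, since the remaining two fixed values are already opposite. The substantive case is $|S_1 \cap S_2|=1$: write $S_1=\{p,q_1\}$ and $S_2=\{p,q_2\}$ with $p,q_1,q_2$ distinct, where $p$ is the shared position (on which the two ridges take opposite values).

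For this case the idea is to extend $R_1$ to a peak by fixing its varying coordinate $q_2$, and to arrange that the antipode of this peak is a peak of $R_2$. The antipode of such a peak fixes positions $\{p,q_1,q_2\}$ with all values reversed; for it to be a peak of $R_2$ it must agree with $R_2$ on $p$ and $q_2$ and then fix the remaining varying coordinate $q_1$ of $R_2$. Agreement on $p$ is automatic, because $R_1$ and $R_2$ already take opposite values there and the antipode reverses the value of $R_1$. Agreement on $q_2$ is the single free choice: I fix $q_2$ in the peak of $R_1$ at the value opposite to whatever $R_2$ fixes at $q_2$, so that the reversal makes them match. The antipode then assigns to $q_1$ the reversal of the value $R_1$ fixes there, which is a legitimate fixing of the varying coordinate $q_1$ of $R_2$, completing the peak of $R_2$ and hence the antipodal pair.

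The step requiring the most care is exactly this last bookkeeping in the $|S_1 \cap S_2|=1$ case: checking that the two extended position sets $S_1 \cup \{q_2\}$ and $S_2 \cup \{q_1\}$ coincide (both equal $\{p,q_1,q_2\}$), and confirming that the lone degree of freedom—the value assigned to $q_2$—can always be chosen to force the needed agreement while every other value is forced yet consistent. Everything else is routine once the coordinate representation of peaks and the antipodal condition are in place.
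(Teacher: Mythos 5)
Your proof is correct and takes essentially the same approach as the paper's: both arguments come down to the coordinate-representation characterization that two ridges contain a pair of antipodal peaks exactly when their fixed-position sets overlap and the ridges disagree at every shared fixed position. Your contrapositive write-up is in fact more explicit than the paper's brief proof (which just asserts this characterization and negates it), since you construct the antipodal peaks outright in both the $|S_1 \cap S_2| = 1$ and $S_1 = S_2$ cases.
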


For example,  in $C^5$ the ridge ${\tt XXX10}$ in $C^5$ has fixed 4th and 5th coordinates.  The ridges ${\tt XXX10}, {\tt XX0X1}$ contain the antipodal pair of peaks 
${\tt XX110}$ and ${\tt XX001}$).  The ridges ${\tt XXX10}$ and ${\tt XX1X0}$ do not contain an antipodal pair of peaks because they both fix the 5th coordinate at ${\tt 0}$. 
The ridges ${\tt XXX10}$ and ${\tt X11XX}$ do not contain a pair of antipodal peaks, and their fixed coordinates occupy different positions.
 
\begin{proof}
Every ridge has two fixed coordinates.  For two ridges $r_1, r_2$ to contain an antipodal pair of peaks, $r_1$ and $r_2$ must have $d-3$ of their varying coordinate positions in common, and in all other coordinate positions, $r_1$ and $r_2$ must differ.  Thus if $r_1, r_2$ do not contain an antipodal pair of peaks, they either have fewer than $d-3$ varying coordinate positions in common (which occurs when their fixed coordinate positions are disjoint) or they must both fix a coordinate position at the same value.
\end{proof}

\begin{lemma}
\label{4spanningridges}
If a set of ridges of $C^4$ does not contain a pair of antipodal edges,
then it must contain four or fewer spanning ridges.
\end{lemma}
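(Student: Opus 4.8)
The plan is to translate the no-antipodal-edges condition for spanning ridges into a purely graph-theoretic statement and then invoke the classical dichotomy for pairwise-intersecting edges. First I would record the structure of a spanning ridge of $C^4$: since it has an {\tt X} in the fourth coordinate, it varies in the fourth position and in exactly one of the first three, and so it fixes exactly two of the coordinate positions $\{1,2,3\}$, each at a value in $\{{\tt 0},{\tt 1}\}$. The key initial observation is that any two $2$-element subsets of $\{1,2,3\}$ must intersect, so no two spanning ridges can ever have disjoint fixed-position sets.

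Next I would apply Lemma \ref{2ridgesnopeaks}. Because the ``disjoint fixed positions'' alternative in that lemma is never available for spanning ridges, the lemma tells us that whenever two spanning ridges contain no pair of antipodal edges (peaks), they must fix a common coordinate position at the same value. Hence, if $S$ is the collection of spanning ridges inside a set of ridges with no antipodal edges, then every pair of members of $S$ agrees on some fixed coordinate. I note that only this one direction of Lemma \ref{2ridgesnopeaks} is needed, so no converse has to be checked.

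I would then encode each spanning ridge as an edge of a graph whose six vertices are the pairs $(i,a)$, with $i \in \{1,2,3\}$ a coordinate position and $a \in \{{\tt 0},{\tt 1}\}$ a value, joining the two such pairs that the ridge fixes. This graph is the octahedron $K_{2,2,2}$ (the three parts being the three positions), and its twelve edges are precisely the twelve spanning ridges. Under this encoding, ``two ridges fix a common position at the same value'' becomes exactly ``the two edges share an endpoint,'' so $S$ corresponds to a family of pairwise-intersecting edges. The concluding step is the standard fact that a family of pairwise-intersecting edges in any graph is either a star (all edges through a single vertex) or a triangle; since every vertex of $K_{2,2,2}$ has degree $4$ and a triangle has only $3$ edges, such a family has at most $4$ edges, giving $|S| \le 4$.

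The step I would be most careful about is the star-or-triangle dichotomy and confirming that it caps the count at $4$ rather than $3$. Here I would include the short argument: if three pairwise-intersecting edges have no common vertex then, intersecting pairwise, they must form a triangle $uvw$, and any further edge meeting each of $uv$, $vw$, and $uw$ is forced to reuse two of $u,v,w$ and so already lies in the triangle. Thus only a star can reach size $4$, and such a star is realized concretely by the four spanning ridges that fix one chosen position at one chosen value (e.g.\ all four with position $2$ fixed at ${\tt 0}$), which also shows the bound of $4$ is attained.
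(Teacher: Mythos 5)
Your proof is correct, but it takes a genuinely different route from the paper's. The paper argues by contradiction inside the bottom facet: if a set contained five spanning ridges, their intersections with the bottom facet would be five distinct edges of a $3$-cube, which must touch at least five of that facet's eight vertices; since the facet has only four antipodal vertex pairs (relative to the facet), two of those vertices are facet-antipodes, and the two vertical spanning edges through them are then antipodal edges of $C^4$ lying in the set. Your argument instead converts the hypothesis into a statement about intersecting edge families: a spanning ridge fixes two of the three positions $\{1,2,3\}$, so the ``disjoint fixed positions'' alternative of Lemma \ref{2ridgesnopeaks} can never occur, forcing any two spanning ridges with no antipodal peaks to share a fixed coordinate value, i.e.\ to share an endpoint as edges of $K_{2,2,2}$; the star-or-triangle dichotomy for pairwise-intersecting edge families then caps the count at $\max\{4,3\}=4$. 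Each approach has its advantages. The paper's proof is shorter and self-contained (it does not invoke Lemma \ref{2ridgesnopeaks}), though it leaves the step that five edges of a $3$-cube span at least five vertices as ``one can verify.'' Your proof is more structural: it identifies exactly which maximal configurations occur (a star at the vertex $(i,a)$ is precisely the set of four spanning ridges lying in the facet with the $i$-th coordinate fixed at $a$, while triangles correspond to asterisk-like triples through a pair of vertical vertices), it shows the bound of four is actually attained, and the same encoding illuminates the case analysis in Lemma \ref{6ridgesperset}, where the interplay between facet-type and asterisk-type sets is exactly the star/triangle distinction.
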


\begin{proof}
Suppose that a set $A_i$ contains five spanning ridges; we shall obtain
a contradiction.  Each of these ridges intersects the bottom facet at an edge,
 so $A_i$ contains five edges from the bottom facet (here, a $3$-cube).
One can verify that $A_i$ must therefore 
contain at least five vertices of the bottom facet that also lie on spanning ridges.

Any facet in $C^4$ contains eight vertices and therefore contains four
pairs of vertices which are antipodal pairs relative to that facet. Thus, $A_i$
must contain at least one pair of vertices $v_1, v_2$ that are
antipodes with respect to the bottom facet and also lie on spanning ridges.  For simplicity,
define $\hat{v}_j$, the \textit{corresponding vertex of} $v_j$, to be
the vertex whose coordinate notation differs from that of $v_j$ in
only the last coordinate. For example, {\tt0010}'s corresponding
vertex is {\tt0011}. (Note that {\tt0011} has the same ``position'' in the
top facet as {\tt0010} has in the bottom facet.)

Since $v_1$ and $v_2$ lie on spanning ridges contained in $A_i$, then
the edge with endpoints $v_1$ and $\hat{v}_1$ and the edge with
endpoints $v_2$ and $\hat{v}_2$ are contained in $A$. But these two
edges are antipodal, contradicting our hypothesis.
\end{proof}

\begin{lemma}
\label{3ridgesnopeaks}
If three ridges of $C^d$ do not contain any antipodal peaks and each have fixed $j$-th coordinate, then their $j$-th coordinates must agree.  In other words, they must all lie on the same facet.
\end{lemma}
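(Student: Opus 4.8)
The plan is to reduce everything to pairwise reasoning via Lemma \ref{2ridgesnopeaks} and then exploit the fact that a ridge fixes only two coordinate positions. Write the three (distinct) ridges as $r_1, r_2, r_3$. Since each fixes coordinate $j$, the fixed positions of $r_i$ form the set $\{j, p_i\}$ for some $p_i \neq j$, with fixed value $a_i$ at position $j$ and fixed value $b_i$ at position $p_i$. The goal is exactly to show $a_1 = a_2 = a_3$, since that is precisely the assertion that all three lie on the common facet obtained by fixing coordinate $j$.

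First I would observe that, by hypothesis, no pair among the three ridges contains a pair of antipodal peaks, so Lemma \ref{2ridgesnopeaks} applies to every pair $r_i, r_k$. Because all three ridges fix position $j$, any two of them have intersecting fixed-position sets (both contain $j$), so the ``disjoint fixed positions'' alternative of Lemma \ref{2ridgesnopeaks} is impossible for every pair. Hence the other alternative must hold for each pair: every pair of the three ridges shares a fixed coordinate position at the \emph{same} coordinate value.

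Next I would argue by contradiction. Suppose the $j$-coordinates do not all agree; since they take values in $\{0,1\}$, two of them must differ, say $a_1 \neq a_2$. For the pair $r_1, r_2$, the shared fixed value cannot occur at position $j$ (they disagree there), so it must occur at their other fixed positions, forcing $p_1 = p_2$ (call it $p$) and $b_1 = b_2$ (call it $b$). Now consider $r_3$. Since $a_1 \neq a_2$ exhaust $\{0,1\}$, the value $a_3$ equals one of them, say $a_3 = a_1$ (the other case is symmetric). Then $r_2$ and $r_3$ disagree at $j$, so by the same reasoning they must share their other fixed position at equal value, giving $p_3 = p$ and $b_3 = b$. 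But then $r_3$ fixes exactly $\{j, p\}$ at the values $(a_1, b)$ — identical to $r_1$ — contradicting that the three ridges are distinct. This contradiction forces $a_1 = a_2 = a_3$.

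The step I expect to require the most care is the bookkeeping in the last paragraph: I must track, for each pair, whether the common fixed coordinate is $j$ or the auxiliary position $p_i$, and I must invoke the binary nature of the coordinate values to ensure $a_3$ cannot avoid both $a_1$ and $a_2$. The only genuine subtlety beyond this bookkeeping is the (implicit) hypothesis that the three ridges are distinct, which is what makes the collision $r_3 = r_1$ a contradiction rather than a tautology; I would state this assumption explicitly at the outset.
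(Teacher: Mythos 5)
Your proof is correct and follows essentially the same route as the paper's: apply Lemma \ref{2ridgesnopeaks} to the pairs that disagree in the $j$-th coordinate (ruling out the disjoint-positions alternative since all three fix position $j$), force those pairs to share their second fixed position and value, and conclude that two of the ridges would have identical coordinate representations, contradicting distinctness. The only cosmetic difference is bookkeeping: the paper singles out the ridge with the minority $j$-value and pairs it against the other two, whereas you pair through $r_2$; your explicit remark that distinctness is needed is a nice touch the paper leaves implicit.
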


\begin{proof}
If not, suppose without loss of generality that ridge $r_1$ fixes the $j$-coordinate at ${\tt 0}$, and ridge $r_2, r_3$ fix the $j$-th coordinate at ${\tt 1}$. Since $r_1$ and $r_2$ both fix the $j$-th coordinate but not at the same value, by Lemma \ref{2ridgesnopeaks}, they must both fix another coordinate, say the $k$-th, and agree in that coordinate.  Since they are ridges, $r_1$ fix only two coordinates, the $j$-th and $k$-th, and they must differ in the $j$-th and agree in the $k$-th coordinates.  Similarly, $r_1$ and $r_3$ must fix the $j$-th and $k$-th coordinates, differ in the $j$-th and agree in the $k$-th.  Therefore ridges $r_2$ and $r_3$ must have identical coordinate representations and be identical facets, a contradiction.
\end{proof}

The following lemma will show that the kind of sets that have no
antipodal edges and appear in a $4$-set ridge cover of $C^4$
must be rather special.

Given a vertex $v$, let $Ast(v)$ denote the set of all ridges that
contain $v$.  We call the set an {\em asterisk set} and call $v$ 
the {\em asterisk vertex} for that set.  Given a coordinate
representation of $v$ in $C^4$, $Ast(v)$ contains the ridges denoted by 
replacing two of the four fixed coordinates of $v$ by {\tt X}'s.
Thus, there are ${4 \choose 2} = 6$
ridges in $Ast(v)$. In general, note that for a given $v \in C^d$,
$Ast(v)$ contains the ridges formed by letting $d-2$ of the $d$
coordinates vary. Therefore, $Ast(v)$ contains ${d \choose 2}$
ridges.  By construction, an asterisk set cannot contain antipodal
edges (since ridges that share an {\tt X} in one coordinate must also
share a fixed coordinate as well).

\begin{lemma}
\label{6ridgesperset}
If no set in a $4$-set ridge cover of $C^4$ contains antipodal edges,
each set must contain exactly six ridges. Furthermore, any six-ridge
set must either contain all of the ridges in a single facet of $C^4$
or contain all of the ridges of an asterisk set.
\end{lemma}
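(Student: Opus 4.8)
The plan is to first reduce the counting assertion to a pure size bound, and then establish both the bound and the classification together by a dichotomy on the fixed coordinate values. Recall that $C^4$ has $2^{4-2}\binom{4}{2}=24$ ridges. Call a set of ridges \emph{safe} if it contains no pair of antipodal edges (antipodal peaks). If I can show that every safe set has at most six ridges, then since the four cover sets are safe and jointly cover all $24$ ridges, the chain $24=|\bigcup_i A_i|\le\sum_i|A_i|\le 4\cdot 6=24$ forces every $|A_i|=6$ with the union disjoint. Thus it suffices to prove that a safe set of ridges has at most six elements, with equality only for the six ridges of a facet or for an asterisk set.

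The organizing idea is a dichotomy on coordinate values. Each ridge fixes exactly two coordinate positions. Call a position $j$ \emph{pure} for a safe set $S$ if all ridges of $S$ that fix $j$ assign it the same value. By Lemma \ref{3ridgesnopeaks}, three ridges fixing $j$ with no antipodal peaks must agree in their $j$-th coordinate; hence an \emph{impure} position is fixed by at most two ridges of $S$. Moreover, if two ridges fix $j$ at opposite values then their fixed positions are not disjoint, so by Lemma \ref{2ridgesnopeaks} they must share their other fixed position and agree there. Consequently such a pair has a common type $\{j,a\}$ with coordinate representations $(\{j,a\},(0,c))$ and $(\{j,a\},(1,c))$ for some position $a$ and value $c$.

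In the case where every position is pure, each type $P=\{i,j\}$ admits at most one ridge of $S$, since its two fixed values are forced to equal the pure values at $i$ and $j$. Therefore $|S|\le\binom{4}{2}=6$, and equality requires all six types to occur with mutually consistent pure values; in that event every ridge contains the single vertex $v=(c_1,c_2,c_3,c_4)$, so $S=Ast(v)$ is an asterisk set. This case is entirely clean and produces exactly the asterisk-set half of the conclusion.

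The remaining case, where some position (say the first) is impure, is where the real work lies, and I expect it to be the main obstacle. Here exactly two ridges fix position $1$, forming a pair $(\{1,a\},(0,c)),(\{1,a\},(1,c))$, while every other ridge of $S$ varies position $1$ and hence fixes two positions among the remaining three. The plan is to apply Lemma \ref{2ridgesnopeaks} systematically to determine exactly when these remaining ridges conflict with the distinguished pair and with one another: the agreement constraints at shared fixed positions chain together, and a finite but delicate bookkeeping shows that reaching six ridges forces the exclusion of the type avoiding position $a$, which in turn places all of $S$ on the single facet $\{a=c\}$, while capping $|S|$ at six in every subcase. Carefully ruling out the hybrid configurations (those with a ridge fixing two of the three remaining positions but not lying on that facet) is the crux that pins the extremal set to a facet rather than to some mixed arrangement, and this completes the classification.
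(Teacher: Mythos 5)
Your reduction to a size bound and your pure/impure dichotomy are sound, and the pure case is complete and correct: if every position is pure, each pair of fixed positions admits at most one ridge, so $|S|\le\binom{4}{2}=6$, with equality exactly for an asterisk set. But the impure case --- which you yourself label ``the crux'' and ``where the real work lies'' --- is never actually carried out: you state a plan (``apply Lemma \ref{2ridgesnopeaks} systematically,'' ``a finite but delicate bookkeeping shows\dots'') and assert its outcome without doing the bookkeeping. This is not a minor omission, because in the impure case the conclusion is genuinely not immediate. After your (correct) normalization, say the distinguished pair fixes positions $\{1,4\}$ with common value $c$ in position $4$, the remaining ridges fall into the three types $\{2,4\}$, $\{3,4\}$, $\{2,3\}$, and each type can a priori hold two ridges of $S$: types $\{2,4\}$ and $\{3,4\}$ are pinned to value $c$ in position $4$ by Lemma \ref{2ridgesnopeaks} but may take both values in their other position, and type $\{2,3\}$ may hold two ridges agreeing in one position. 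So the naive count gives $2+2+2+2=8$, not $6$. Both the cap $|S|\le 6$ and the identification of the extremal set with a facet hinge on exactly the interaction you deferred: a type-$\{2,3\}$ ridge shares position $2$ with type-$\{2,4\}$ ridges and position $3$ with type-$\{3,4\}$ ridges, so by Lemma \ref{2ridgesnopeaks} it must agree with each of them there, which is impossible once a type is occupied at both values; hence either $S$ has no type-$\{2,3\}$ ridge (and a six-ridge $S$ consists of the six ridges fixing position $4$ at $c$, i.e.\ a facet), or $|S|\le 2+1+1+2=6$ with the equality subcase killed by the same agreement clash (two type-$\{2,3\}$ ridges occupy both values in one of positions $2,3$, leaving no admissible ridge of the corresponding type $\{2,4\}$ or $\{3,4\}$). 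None of this appears in your write-up; as submitted, the facet half of the classification and the bound $|S|\le 6$ are unproved assertions.

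For what it is worth, your intended route is genuinely different from the paper's: the paper pigeonholes on fixed coordinates to find three ridges fixing a common position, places them on one facet via Lemma \ref{3ridgesnopeaks}, and then argues geometrically with spanning ridges and counts of edges and their antipodes in the opposite facet. Your coordinate-type bookkeeping, completed as sketched above, would give a cleaner and entirely combinatorial proof, and it would be a legitimate alternative. But a proof that identifies its central case as ``the main obstacle'' and then skips it has a genuine gap precisely there.
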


\begin{proof}
A $4$-set ridge cover of $C^4$ must cover $24$ ridges, hence there must be a set $A_i$ of the cover that contains at least $6$ ridges.
Since each of those $6$ ridges is in exactly $2$ facets (corresponding to exactly $2$ fixed coordinates), then by the pigeonhole principle (noting $2$ times $6$ ridges correspond to $12$ fixed coordinates among $4$ possible coordinates) we see that one of the $4$ coordinates, say the $j$-th, must be fixed by at least $3$ ridges of $A_i$.  

By Lemma \ref{3ridgesnopeaks}, these $3$ ridges all lie on the same facet; without loss of generality, suppose this facet is the top facet ${\tt XXX1}$.

We first note that the bottom facet cannot contain any ridges of $A_i$, for if it did, the ridge on the bottom facet together with two ridges in the top facet would constitute a set of three ridges with same fixed coordinate position, but differing coordinate values in that position, contradicting Lemma \ref{3ridgesnopeaks}.
Then one of the following must hold:
\begin{enumerate}
\item $A_i$ contains 3 ridges of the top facet, no ridges in the bottom facet
\item $A_i$ contains 4 or 5 ridges of the top facet, no ridges in the bottom facet
\item $A_i$ contains the 6 ridges of the top facet.
\end{enumerate}
We consider each of these cases in turn. 

\begin{enumerate}
\item
Suppose that $A_i$ contains at least 3 ridges of the top facet and no ridges of the bottom facet.  It must therefore have at least 3 spanning ridges.  There are two general ways that $A_i$ may contain three ridges in the top facet. One way is depicted in Figure \ref{4cube3mainast}, in which the three ridges intersect at a vertex. 

\begin{figure}[htb]
\begin{center}
\scalebox{0.4}{\includegraphics{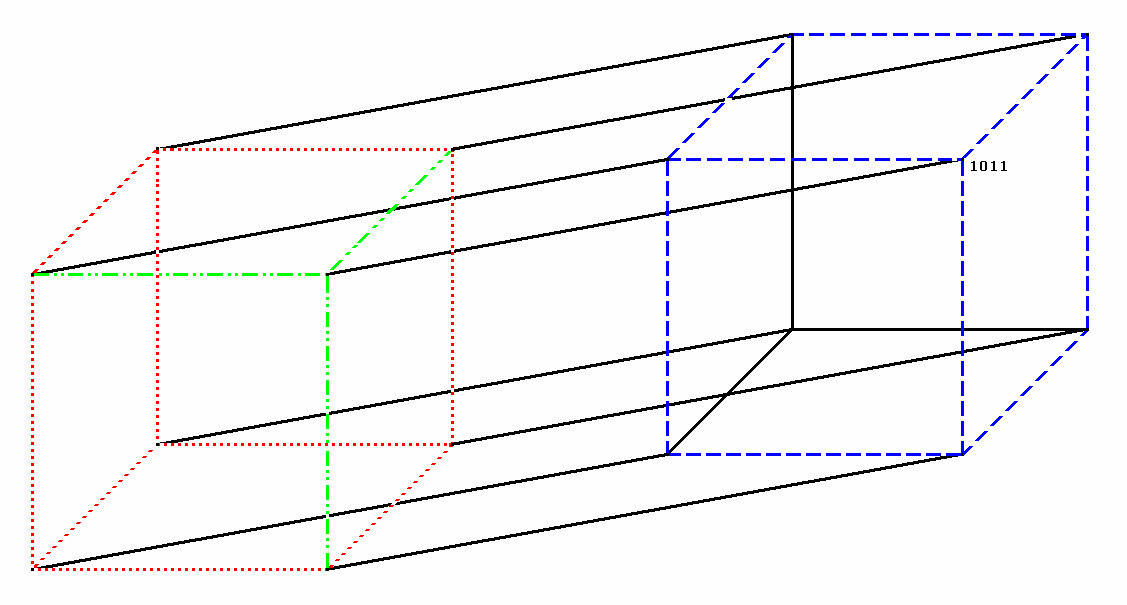}}
\end{center}
\caption{$C^4$ in situation (3) of
Lemma \ref{6ridgesperset}, where $A_i$ contains three ridges in
the top facet (here, at right) that meet at a vertex.  
Note that ridges of $C^4$ are $2$-faces, and
the three ridges are outlined with blue edges.
The color legend is as follows: blue=dashed, green=dash-dot-dot,
red=dotted.
}
\label{4cube3mainast}
\end{figure}

Without loss of generality, let $A_i$ contain the ridges outlined in
blue ({\tt X0X1}, {\tt XX11}, and {\tt 1XX1}). Then, the edges of
these ridges have antipodes ({\tt X000}, {\tt X100}, {\tt X110}, {\tt
  00X0}, {\tt 01X0}, {\tt 11X0}, {\tt 0X00}, {\tt 0X10}, {\tt 1X00}),
indicated in red. The spanning ridges that intersect the bottom main
facet at these edges cannot lie in $A_i$. Thus, the only spanning
ridges that can lie in $A_i$ are {\tt 10XX}, {\tt X01X}, and {\tt
  1X1X}. (The intersections of these ridges with the bottom facet
are outlined in green). Notice, however, that the blue and the green
ridges all contain the vertex {\tt1011}, and hence
$A_i = Ast({\tt1011})$.   See Figure \ref{4cube3mainast}.

The other way that $A_i$ may contain three ridges in the top facet
is if they do not have a common intersection;
they form a ``U'' shape with two of the ridges intersecting the third
ridge at different edges. This, however, results in $A_i$ containing
ten edges in the top facet. Therefore, there are only two edges
in the bottom facet that are not antipodes to these; but by 
assumption, $A_i$ contains three spanning ridges, and
these intersect the bottom facet at three distinct edges.  Thus one of these edges of $A_i$ in the bottom facet is antipodal to an edge of $A_i$ in the top facet, contradicting our assumption that $A_i$ contains no antipodal edges.  Thus the ``U'' shape cannot occur.

\item
Suppose that $A_i$ contains 4 or 5 ridges of the top facet, and no ridges belonging to the bottom facet.  For any four (respectively, five)
ridges in the top facet, at least eleven (resp. twelve) of that
facet's edges will be covered by $A_i$. Hence at most one (resp. no)
edge in the bottom facet can be covered by the two (resp. one)
spanning ridges, a contradiction.  Thus this case cannot occur.

\item
Finally, suppose that $A_i$ contains the six ridges of the top facet. This set does not contain antipodal edges, so it is a valid construction of a six-ridge set.
\end{enumerate}

Hence, we see there are only two ways to construct a set with at least 6 ridges so that it does not contain a pair of antipodal edges: the set must contain all six ridges in a single facet, or the set must contain all six ridges in an asterisk set.  We now show that such sets cannot contain more than 6 ridges.

Suppose that $A_i$ contains the ridges of a single facet $F$. If there were any other ridge in $A_i$, it would intersect the antipode of an edge in $F$.  Alternately, suppose that $A_i$ contains the ridges of an asterisk set, which includes sixteen edges. The $4$-cube contains 32 edges, in sixteen pairs of antipodal edges. If there were any other ridge in $A_i$, it would contain at least seventeen edges, which by the pigeon-hole principle would include a pair of antipodal edges. Thus, this set also may not contain more than six ridges.

Therefore, in order for the four sets of a ridge cover to cover all 24 ridges in $C^4$, each must contain exactly six ridges.
\end{proof}

\begin{lemma} \label{nofacet4}
Let $A_i$ be one of four sets of a ridge cover of $C^4$ such that
$A_i$ contains only ridges which belong to a single facet. Then some
other $A_j$ in the ridge cover of $C^4$ will contain a pair of
antipodal edges.
\end{lemma}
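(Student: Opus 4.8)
The plan is to argue by contradiction: I would assume that \emph{no} set of the cover contains a pair of antipodal edges and show this is incompatible with one of the sets consisting of the ridges of a single facet. By the symmetry of $C^4$ I may assume without loss of generality that the distinguished set $A_i$ consists of all six ridges of the top facet \texttt{XXX1}; note a single facet contains no antipodal pair of edges, so this assumption is consistent and, in particular, any antipodal edges produced must lie in one of the \emph{other} sets. Under the contradiction hypothesis, Lemma \ref{6ridgesperset} applies, giving that each of the four sets contains exactly six ridges and is either a facet or an asterisk set. Since $C^4$ has exactly $24$ ridges and $4 \times 6 = 24$, the four sets must in fact \emph{partition} the ridges; in particular they are pairwise disjoint.

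First I would record the immediate consequence that none of the remaining sets $A_j$ (for $j \neq i$) may contain a top-facet ridge, since all six top-facet ridges already belong to $A_i$ and the sets are disjoint. The heart of the argument is then a classification of which six-ridge sets survive this restriction. A facet that is neither the top nor the bottom facet (a ``side'' facet such as \texttt{0XXX}) always contains exactly one ridge of the top facet (here \texttt{0XX1}), so every side facet is excluded. Likewise, an asterisk set $Ast(v)$ with $v$ in the top facet contains three top-facet ridges, so it too is excluded. Hence each $A_j$ with $j \neq i$ must be either the bottom facet \texttt{XXX0} or an asterisk set based at a vertex of the bottom facet.

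With this classification in hand, the contradiction comes from a simple count on the six ridges of the bottom facet, all of which must be covered by the three remaining sets. The bottom facet contains all six of these ridges, while an asterisk set $Ast(v)$ with $v$ in the bottom facet contains exactly three of them (those obtained by leaving the fourth coordinate fixed at \texttt{0}). If among the three remaining sets there are $a$ copies of the bottom facet (so $a \le 1$, as the sets are distinct) and $b$ bottom-based asterisk sets, with $a + b = 3$, then disjointness forces $6a + 3b \le 6$, which is impossible for every admissible pair $(a,b)$. This contradiction shows the sets cannot all avoid antipodal edges; and since $A_i$ itself does avoid them, some other $A_j$ must contain a pair of antipodal edges.

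I expect the classification step---verifying that disjointness from the top facet eliminates all six-ridge sets except the bottom facet and the bottom-based asterisk sets---to be the main point requiring care, since it is where the specific combinatorics of facets and asterisk sets in $C^4$ enters. The concluding count is then routine once the admissible sets have been pinned down.
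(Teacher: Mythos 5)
Your proof is correct, but it takes a genuinely different route from the paper's. You argue by contradiction: assuming no set has antipodal edges, you invoke Lemma \ref{6ridgesperset} to force every set to be a full facet ridge set or an asterisk set, use $4\times 6=24$ to get pairwise disjointness, classify which six-ridge sets can avoid the top facet (only the bottom facet set and bottom-based asterisk sets survive), and then kill all cases with the count $6a+3b\le 6$, $a+b=3$. All of these steps check out: a side facet meets the top facet in exactly one ridge, a top-based asterisk set contains exactly $\binom{3}{2}=3$ top-facet ridges, and a bottom-based asterisk set contains exactly $3$ bottom-facet ridges, so the arithmetic is right. The paper instead gives a direct dimension-reduction argument: if $A_i$ lies in (say) the bottom facet, it covers none of the twelve spanning ridges, which correspond bijectively to the edges of $C^3$ by deleting the trailing {\tt X}; the other three sets thus induce a $3$-set ridge cover of $C^3$, and Lemma \ref{weakC3} produces antipodal vertices there, which lift (by appending {\tt X}) to antipodal edges of $C^4$ in one of the other sets. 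The paper's proof is shorter, needs no hypothesis on the other three sets (it never assumes they lack antipodal edges, and never uses Lemma \ref{6ridgesperset}), and reuses the already-proved weak theorem in a lower dimension. Your proof is heavier in its hypotheses --- it needs the global no-antipodal-edges assumption to trigger the classification lemma --- but since that assumption is precisely the negation of the conclusion (your observation that a single facet contains no antipodal edge pair makes this equivalence exact), it proves the same statement; in exchange, it yields extra structural information (the other sets must all be bottom-based) and reduces the conclusion to a transparent finite count.
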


\begin{proof}
Without loss of generality, let $A_i$ contain the ridges of the bottom
facet ({\tt XXX0}). Then $A_i$ does not cover any of the twelve
spanning ridges (those whose coordinates end in {\tt X}). The spanning
ridges must therefore be covered by the other three sets.

These twelve spanning ridges of $C^4$ correspond naturally to the
edges of $C^3$. Namely, if the {\tt X} at the end of one of the
ridge's coordinates in $C^4$ is removed, we simply have the
coordinates of an edge in $C^3$. For example, {\tt X00X} lies in $C^4$
and {\tt X00} lies in $C^3$.  

Therefore, a cover of the ridges of $C^4$ by three sets corresponds to
a cover of the edges of $C^3$ by three sets, which by Lemma \ref{weak}
implies there is a set of the cover that contains a pair of 
antipodal vertices.  By concatenating {\tt X} to the end of their
coordinate representations, these vertices correspond to edges in
$C^4$ that are antipodal and contained in one of the original sets of
the ridge cover of $C^4$.
\end{proof}

We now prove Theorem \ref{strong} on $C^4$.

\begin{proof}
Lemmas \ref{6ridgesperset} and \ref{nofacet4} show that if four sets of a ridge cover of $C^4$ each contain no antipodal edges, they must all be asterisk sets.  
Furthermore, no two asterisk sets of a $4$-set cover may contain a common ridge because each set has six ridges and $C^4$ contains 24 ridges.

Denote the four asterisk vertices by $v_1, v_2, v_3$, and $v_4$.
If the asterisk sets of $v_i$ and $v_j$ do not contain the same ridge, they must differ in at least $3$ or $4$ coordinates; otherwise any two coordinates on which they agree would determine a spanning ridge that lies in both asterisk sets.  This observation shows that $v_1$ must differ from each of $v_2$ and $v_3$ in at least $3$ coordinates; hence $v_2$ and $v_3$ must agree on at least $3+3-4=2$ coordinates, and therefore there is a spanning ridge covered by $Ast(v_2)$ and $Ast(v_3)$.
So four (indeed, three) asterisk sets cannot cover ridges of $C^4$ without overlap, and therefore four asterisk sets cannot cover the ridges of $C^4$, contradicting our assumption that a $4$-set ridge cover of $C^4$ was possible with no set containing antipodal edges.

Therefore some set of a $4$-set ridge cover of $C^4$ must contain antipodal edges.
\end{proof}

\section{The sharpness of $k(d)$}
\label{sec:sharpness}

We have so far shown that a $d$-set ridge cover of $C^d$ contains a set that is $k(d)$-self-antipodal, where
$$
k(d)=
\left\{
\begin{array}{ll}
d-2, & d=1 \\
d-3, & 2 \leq d \leq 4 \\
d-4, & d \geq 5. \\
\end{array}
\right.
$$
This proves all aspects of Theorem \ref{complete} except the sharpness of $k(d)$, which is the statement of Theorem \ref{sharpness}.

To prove that these values of $k(d)$ are sharp for all $d\neq 5$, we show
that there exists a $d$-set ridge cover of $C^d$ that contains a pair of antipodal $k(d)$-faces but not antipodal $(k(d)+1)$-faces.  
For $d=5$ we show that there there exists a $d$-set ridge cover that does not contain any antipodal $(d-2)$-faces, i.e., ridges.
We first address the cases for which $1 \leq d \leq 5$, which are each proven by explicit examples.

On $C^1$, ridges are empty sets, and an empty set is antipodal to itself and by convention $(-1)$-dimensional.

For $C^2$, $C^3$, $C^4$, and $C^5$, we exhibit a $d$-set ridge cover that does not contain antipodal ridges. In fact, for all these cubes, we exhibit a 2-set ridge cover $A_1, A_2$ that does not contain antipodal ridges: since ridges come in antipodal pairs, for any pair of antipodal ridges, put one in $A_1$ and the other in $A_2$. Then, neither set contains antipodal ridges.

For $d \geq 6$, we construct a $d$-set ridge cover that is not
$(d-3)$-self-antipodal.  Because $d \geq 6$ if and only if $4 + \left
\lceil \frac{d}{3} \right \rceil \leq d$, the sharpness of $k(d)$ for
$d \geq 6$ results from the following lemma.

\begin{lemma}\label{upperbound}
Consider a $d$-cube (with $d \geq 4$).  There exists a collection of $4 + \left \lceil \frac{d}{3} \right \rceil$ sets that covers the ridges of $C^d$ such that no set contains antipodal peaks.
\end{lemma}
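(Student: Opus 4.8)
The plan is to exhibit an explicit cover and to verify, using the characterization implicit in the proof of Lemma \ref{2ridgesnopeaks}, that each of its $4 + \lceil d/3 \rceil$ sets avoids antipodal peaks. First I would record the key reformulation, which is exactly the content of that proof: a union of ridges contains no pair of antipodal peaks if and only if any two of its ridges either fix some common coordinate position at the same value, or have disjoint sets of fixed coordinate positions. Both of these escape routes are easy to force by design, so the whole problem reduces to bookkeeping about fixed positions and values.

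The backbone of the cover will be four asterisk sets. I would partition the $d$ coordinate positions into $m = \lceil d/3\rceil$ blocks of size at most three and give each position a within-block index in $\{1,2,3\}$. For each of the four even-weight words {\tt 000}, {\tt 011}, {\tt 101}, {\tt 110}, let $V$ be the vertex assigning value $e_k$ to every coordinate of within-block index $k$, and take the asterisk set $Ast(V)$. Since asterisk sets contain no antipodal edges, they contain no antipodal peaks when $d \ge 4$, so these four sets are automatically admissible. The crucial computation is to pin down exactly which ridges they miss: a ridge with fixed positions $\{i,j\}$ and values $(v_i,v_j)$ lies in one of these asterisk sets precisely when some $V$ reads $(v_i,v_j)$ on $(i,j)$. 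Because $\{{\tt 000},{\tt 011},{\tt 101},{\tt 110}\}$ is the even-weight (parity) code of length three, it realizes all four value combinations on any two \emph{distinct} within-block indices; hence the only uncovered ridges are those whose two fixed positions carry the \emph{same} within-block index (so they lie in different blocks) and opposite values.

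It then remains to cover this residual family with the remaining $m$ sets. Each residual ridge has exactly one coordinate fixed at {\tt 0} and one fixed at {\tt 1}, and I would route it to the set $D_s$ indexed by the block $s$ containing its {\tt 0}-coordinate, so that $D_1,\ldots,D_m$ partition the residual ridges. To see that $D_s$ avoids antipodal peaks, take two of its ridges: if they share a within-block index, then their {\tt 0}-coordinates occupy the same position (block $s$, that index), so they share that fixed token; if their indices differ, then their fixed positions lie in different index classes and are therefore disjoint. In either case the criterion above is met, so each $D_s$ is admissible, and the four asterisk sets together with $D_1,\ldots,D_m$ form a cover by $4 + \lceil d/3\rceil$ admissible sets.

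The step I expect to be the crux is arranging the four base vertices so that what escapes them is exactly this tractable ``same-index, opposite-value'' residual rather than an arbitrary leftover that might need more than $\lceil d/3\rceil$ sets; this is where the parity-code choice does the real work, since it forces full coverage on every pair of distinct indices while leaving only a single obstruction on each repeated index. Once that is arranged, the residual collapses to a one-coordinate routing problem solved by a single ``out-star'' per block, and the remaining details—short blocks when $3 \nmid d$, and checking the small cases $d=4,5$ against the stated count—are routine.
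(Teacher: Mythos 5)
Your construction is, up to a transposition of terminology, the same as the paper's: your three ``within-block index classes'' are the paper's three blocks $\alpha,\beta,\gamma$ of size roughly $d/3$; your four parity-code vertices are exactly the paper's $v_1=\alpha\beta\gamma$, $v_2=\bar{\alpha}\bar{\beta}\gamma$, $v_3=\alpha\bar{\beta}\bar{\gamma}$, $v_4=\bar{\alpha}\beta\bar{\gamma}$; and your sets $D_s$ (routing each residual ridge by the location of its fixed {\tt 0}) are the paper's sets $A_{4+i}$. Where you genuinely improve on the paper is in identifying the uncovered ridges: you observe directly that a ridge lies in $Ast(V)$ if and only if $V$ matches its two fixed values, and that the even-weight code $\{{\tt 000},{\tt 011},{\tt 101},{\tt 110}\}$ realizes all four value patterns on any two distinct classes, so the residual is exactly the ``same class, opposite values'' family. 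The paper instead counts doubly-covered ridges (Lemma \ref{totalnotcovered}) and matches that count against the residual family; your route to the same conclusion is shorter and more transparent.

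There is one genuine error, though it is easily repaired. You justify admissibility of the four asterisk sets by the claim that ``asterisk sets contain no antipodal edges.'' That claim is true in $C^4$ (where the paper states it), but it is false for $d\ge 5$: in $C^5$ the ridges ${\tt XXX00}$ and ${\tt 00XXX}$ both lie in $Ast({\tt 00000})$, yet they contain the antipodal edges ${\tt 11X00}$ and ${\tt 00X11}$. So your premise fails precisely in the range where the lemma is actually needed ($d\ge 6$). The conclusion you want --- no antipodal \emph{peaks} --- is nonetheless true, and it follows in one line from the criterion you yourself stated at the outset: any two ridges of $Ast(v)$ agree with $v$ on their fixed positions, so either they share a fixed position (necessarily at the same value, namely that of $v$) or their sets of fixed positions are disjoint; in either case Lemma \ref{2ridgesnopeaks} rules out antipodal peaks. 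Replace the appeal to antipodal edges with this direct argument and your proof is complete.
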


\begin{proof}
We give a construction of such a cover, which
consists of four asterisk sets together with $\left \lceil \frac{d}{3}
\right \rceil$ more sets which cover the remaining ridges.

Let us start by choosing four asterisk vertices.  Since our goal is to cover $C^d$ with as few sets as possible, we want to choose four vertices such that the union of their asterisk sets covers as many ridges as possible, or equivalently, such that the number of ridges that are covered by more than one set is minimized.

Intuitively, if we choose two vertices that are close to each other, there will be many ridges that intersect both vertices.  Hence, these ridges belong to the asterisk sets of both vertices.  Therefore, we want to choose four vertices such that the ``distance'' between each pair of vertices is maximized.

Using the coordinate notation, we can measure the ``distance'' between two vertices by using Hamming distance, the number of common coordinates that they share.  For example, in $d = 8$, the vertex $a_1 = \tt 000 \ 000 \ 00$ is ``closer'' to $a_2 = \tt 000 \ 000 \ 01$ than it is to $a_3 = \tt 010 \ 001 \ 11$.  Note that $a_1$ and $a_2$ have $d-1$ coordinates in common, which means that they lie on the same edge.  On the other hand, $a_1$ and $a_3$ have only $d - 4$ coordinates in common, which means that they lie on the same $4$-face.  We can easily check that the intersection of $Ast(a_1)$ and $Ast(a_3)$ contains fewer ridges than the intersection of $Ast(a_1)$ and $Ast(a_2)$.

We shall construct asterisk vertices using strings of $\tt 0$'s and $\tt 1$'s in
various lengths, called {\em blocks}.
For each $d$, define the blocks $\tt \alpha$, $\tt \beta$, and $\tt
\gamma$ to be strings containing all $\tt 0$'s whose lengths are specified
by the following table:
$$\begin{array}{|c|c|c|c|} 
\hline
& & & \\
d \mod 3 & |\alpha| & |\beta| & |\gamma| \\
& & & \\
\hline
& & & \\
0 & \frac{d}{3} & \frac{d}{3} & \frac{d}{3} \\
& & & \\
1 & \left \lceil \frac{d}{3} \right \rceil & \left \lfloor \frac{d}{3} \right \rfloor & \left \lfloor \frac{d}{3} \right \rfloor \\
& & & \\
2 & \left \lceil \frac{d}{3} \right \rceil & \left \lceil \frac{d}{3} \right \rceil & \left \lfloor \frac{d}{3} \right \rfloor\\
& & & \\
\hline
\end{array}$$
Let the block $\tt \bar{\alpha}$ be an $|{\tt \alpha}|$-tuple
containing all $\tt 1$'s, and define $\tt \bar{\beta}$ and $\tt
\bar{\gamma}$ similarly.

Without loss of generality, we choose four asterisk vertices by
concatenating blocks as follows:  
let $v_1 =  \alpha \beta \gamma$, 
$v_2 =  \bar{\alpha} \bar{\beta} \gamma$, 
$v_3 =  \alpha       \bar{\beta} \bar{\gamma}$, and 
$v_4 =  \bar{\alpha} \beta       \bar{\gamma}$.

Thus each vertex consists of three blocks, and for any vertex $v_i$, each of the other three vertices share common coordinates with $v_i$ on precisely one of the blocks.
For example, when $d = 8$, the four asterisk vertices are $v_1 = \tt 000 \ 000 \ 00$, $v_2 = \tt 111 \ 111 \ 00$, $v_3 = \tt 000 \ 111 \ 11$, and $v_4 = \tt 111 \ 000 \ 11$.  The vertex $v_1$ has the same first block as $v_3$, the same second block as $v_4$, and the same third block as $v_2$.

Let the first four cover sets be $A_i = Ast(v_i)$, for $i = 1, \ldots, 4$, where $Ast(v_i)$ denotes the asterisk of $v_i$.  We would like to count the number of ridges that are not covered by these four sets.  

\begin{lemma} \label{totalnotcovered}
In the choice of $A_i$ described above, each ridge is covered by at most two of the four asterisk sets, where the number of ridges that are doubly-covered is:
\begin{eqnarray*}
\frac{1}{3} d (d-3) & for \ d \equiv 0 \pmod{3} \\
\frac{1}{3} (d-1)(d-2) & for \ d \equiv 1,2 \pmod{3}
\end{eqnarray*}
Moreover, this is equal to the number of ridges that are not covered by the four asterisk sets.
\end{lemma}

\begin{proof}
We first show that each ridge is covered by at most two asterisk sets.
Consider a ridge $R$ that belongs to at least two asterisk sets, say
$Ast(v_i)$ and $Ast(v_j)$.  Let $v_i$ and $v_j$ have the $k$-th block
of coordinates in common.  Hence, $R$ must have its fixed coordinates
in the $k$-th block.  Since neither $v_i$ nor $v_j$ has the same
$k$-th block as either of the other two vertices (true by our choice
of the four asterisk vertices), $R$ does not belong to either of the
remaining two asterisk sets.  Therefore, any ridge belongs to at most
two asterisk sets.

Observe that a ridge belongs to an asterisk set if its two fixed
coordinates agree with the corresponding coordinates of the asterisk
vertex. Thus, a ridge is covered by both $Ast(v_i)$ and $Ast(v_j)$
only if its two fixed coordinates agree with the corresponding
coordinates of both $v_i$ and $v_j$.  Note that if $v_i$ and $v_j$
have exactly $k$ coordinates in common, there are ${k \choose 2}$ such
ridges.

For all $i \neq j$, let $k_{i, j} = k_{j, i}$ be the number of common
coordinates between vertices $v_i$ and $v_j$.  Then the number of
ridges that are doubly-covered by the four asterisk sets is:
\begin{equation} \label{doublycovered} 
\displaystyle\sum_{i < j}{k_{i,j} \choose 2} = {k_{1,2} \choose 2} + {k_{1,3} \choose 2} + {k_{1,4} \choose 2} + {k_{2,3} \choose 2} + {k_{2,4} \choose 2} + {k_{3, 4} \choose 2}.
\end{equation}
By our choice of vertices, we have:
\begin{eqnarray*}
k_{1,3} = k_{2,4} &=& |{\tt \alpha}| = \left \lceil \frac{d}{3} \right \rceil, \\
k_{1,4} = k_{2,3} &=& |{\tt \beta}| = \left \{ \begin{array}{l l}\left \lfloor \frac{d}{3} \right \rfloor & d \equiv 0, 1 \pmod{3} \\ \left \lceil \frac{d}{3} \right \rceil & d \equiv 2 \pmod{3} \end{array} \right. , \\
k_{1,2} = k_{3,4} &=& |{\tt \gamma}| = \left \lfloor \frac{d}{3} \right \rfloor.
\end{eqnarray*}

Substituting these values into (\ref{doublycovered}), the number of ridges that are doubly-covered is
\begin{eqnarray*}
\frac{1}{3} d (d-3) & for \ d \equiv 0 \pmod{3} \\
\frac{1}{3} (d-1)(d-2) & for \ d \equiv 1,2 \pmod{3}
\end{eqnarray*}

Since an asterisk set contains ${d \choose 2}$ ridges and $C^d$ contains $4{d \choose 2}$ ridges, and since a ridge that is covered by the union of the asterisk sets is covered by at most two of them, the number above is equal to the number of ridges that are not yet covered by the asterisk sets.
\end{proof}

To describe the coordinate notations of the ridges that are not yet covered, we define the following sets.  Let $A'$ be the set of $|{\tt \alpha}|$-tuples such that each tuple has only two fixed coordinates (and $\tt X$'s in the remaining coordinates) where one of the fixed coordinates is ${\tt 0}$ and the other fixed coordinate is ${\tt 1}$. Define $B'$ and $\Gamma'$ similarly.  Then, the ridges that are not yet covered are of the following forms:
\begin{eqnarray}
\tt \alpha' & \tt X \ldots X & \tt X \ldots X \label{form1}\\
\tt X \ldots X & \tt \beta' & \tt X \ldots X \label{form2}\\
\tt X \ldots X & \tt X \ldots X & \tt \gamma' \label{form3}
\end{eqnarray}
where ${\tt \alpha'} \in A'$, ${\tt \beta'} \in B'$, and ${\tt \gamma'} \in \Gamma'$.

We need to partition these ridges (that are not yet covered) into sets
such that each set does not contain any pair of antipodal peaks.  We
claim that we can do so in $\left \lceil \frac{d}{3} \right \rceil$
sets.  To do this, consider ridges of each of the forms (\ref{form1}),
(\ref{form2}), and (\ref{form3}) separately.  Let us first consider
the ridges of form (\ref{form1}).  There are $2{|{\tt \alpha}| \choose
  2}$ many of them, but we can partition them into $|{\tt \alpha}|$
sets by putting into the same set the ridges whose $\tt 0$ appears at
the same given coordinate.

\begin{example} \label{exampleridges}
Consider $d = 8$.  The ridges of form (\ref{form1}) that are not yet covered are
$$
\begin{array}{c c}
\tt X10 \ XXX \ XX & \tt 10X \ XXX \ XX \\
\tt 1X0 \ XXX \ XX & \tt 0X1 \ XXX \ XX \\
\tt O1X \ XXX \ XX & \tt X01 \ XXX \ XX \\
\end{array}
$$
Then, we can partition these ridges into three sets such that the location of the ${\tt 0}$ in each set is the same.
$$\begin{array}{|c|c|c|}
\hline
\tt 01X \ XXX \ XX & \tt 10X \ XXX \ XX & \tt 1X0 \ XXX \ XX \\
\tt 0X1 \ XXX \ XX & \tt X01 \ XXX \ XX & \tt X10 \ XXX \ XX \\
\hline
\end{array}$$
\end{example}

Observe that for two ridges to contain no pair of antipodal peaks, one of the following must hold.
\begin{enumerate}
	\item They have no fixed coordinates in the same location, or
	\item if at least one of the fixed coordinates is in the same location, the value of this coordinate must be the same in both ridges.
\end{enumerate}

Since the second condition is fulfilled by each of the $| {\tt\alpha}
|$ sets above, then none of these sets contains antipodal peaks.
Moreover, there are $\left \lceil \frac{d}{3} \right \rceil - 1$
ridges with $\tt 0$ at each coordinate of the first block.  Since
there are $\left \lceil \frac{d}{3} \right \rceil$ possible places for
$\tt 0$ in the first block, we can form $\left \lceil \frac{d}{3}
\right \rceil$ sets, containing a total of $\left \lceil \frac{d}{3}
\right \rceil (\left \lceil \frac{d}{3} \right \rceil - 1)$ ridges,
which is the total number of ridges of the form (\ref{form1}).

We partition the ridges of form (\ref{form2}) and the ridges of form (\ref{form3}) in similar manners.  We note that the union of the sets of ridges of distinct forms will not contain antipodal peaks because any two ridges of distinct forms have no fixed coordinates in the same location.  Thus, the total number of sets needed to cover all the remaining ridges is $\max \{ |\alpha|, |\beta|, |\gamma| \}$, which is $\left \lceil \frac{d}{3} \right \rceil$.

\begin{example}
Again, consider $d = 8$.  We partition the ridges of form (\ref{form2}) and the ridges of form (\ref{form3}) as we did with ridges of form (\ref{form1}) in Example \ref{exampleridges}.  In this dimension, the ridges of forms (\ref{form1}) and (\ref{form2}) are each partitioned into three sets, while the ridges of form (\ref{form3}) are partitioned into two sets.  Hence, we can form three sets, each containing at most one partition of a given form.  In this example, we let the set $A_{4 + i}$ contain ridges whose fixed $\tt 0$ is located at the $i$-th coordinate of the block it is in.  Then, we obtain the following $\left \lceil \frac{8}{3} \right \rceil$ sets that cover the remaining ridges of $C^8$.
$$\begin{array}{|c|c|c|}
\hline
A_5 & A_6 & A_7\\
\hline
\tt 01X \ XXX \ XX & \tt 10X \ XXX \ XX & \tt 1X0 \ XXX \ XX \\
\tt 0X1 \ XXX \ XX & \tt X01 \ XXX \ XX & \tt X10 \ XXX \ XX \\
 & & \\
\tt XXX \ 01X \ XX & \tt XXX \ 10X \ XX & \tt XXX \ 1X0 \ XX \\
\tt XXX \ 0X1 \ XX & \tt XXX \ X01 \ XX & \tt XXX \ X10 \ XX \\
 & & \\
\tt XXX \ XXX \ 01 & \tt XXX \ XXX \ 10 & \\
\hline
\end{array}$$
\end{example}

Hence, for all $d \geq 4$, the ridges of the $d$-cube can be covered with $4 + \left \lceil \frac{d}{3} \right \rceil$ sets, where each set does not contain antipodal peaks. This completes the proof of Lemma \ref{upperbound}. 
\end{proof}

By Lemma \ref{upperbound}, for $d \geq 4$, there exists a collection
of $4 + \left \lceil \frac{d}{3} \right \rceil$ sets, each containing
no pair of antipodal peaks, that covers all the ridges of $C^d$.
Recall that for $d \geq 6$, $4 + \left \lceil \frac{d}{3} \right
\rceil \leq d$.  Thus, for all $d \geq 6$, we can construct a 
$d$-set ridge cover such that each 
set in the cover contains no antipodal peaks of
$C^d$.  This implies that $k(d)$ is sharp for all $d \geq 6$.

\section{Discussion}

In this paper, we have shown how the classical LSB theorem on covers
of spheres has a combinatorial analogue for faces of hypercubes.
In particular, since the boundary of a hypercube is a topological
sphere, we expect that if we cover the facets of a ($d$-dimensional) hypercube
by $d$ collections of
facets, then one collection should contain a pair of antipodal points;
but it is somewhat surprising that if we cover the ridges of a
hypercube by $d$ sets, each a union of ridges, then some set 
contains a pair of antipodal points.

In fact, we showed that some set contains antipodal
faces of higher dimension, and determined a maximum dimension of
face that can be guaranteed, sharp in all dimensions except $d=5$.
We now discuss some possible directions to extend our work.

We have considered some aspects of the relationship among four variables:
the dimension of antipodality, the number of sets in the cover, the
dimension of the faces covered, and the dimension of the hypercube. As
an extension, we might vary any three of these parameters and draw
conclusions about the fourth. For example, suppose that we cover the
peaks of $C^d$ such that no set contains antipodal vertices. We might then ask 
what is the minimum number of sets required for such a
cover? (When $d=3$ this number is two, and when $d=4$ this number is
at most four.)

Another extension that we suggest is to generalize our result to other
highly-symmetric polytopes such as the Platonic solids or certain
prisms. For example, consider a prism that is the product of an interval with a $(2m)$-gon.
Note that the
ridges of a 3-dimensional hexagonal prism can be covered by three
sets such that no set contains antipodal vertices. What, then, are the
necessary conditions to guarantee that some set in a ridge cover of a
polytope contains antipodal vertices (or antipodal faces of higher
dimensions)? 

A final extension is to consider a subdivision of the $d$-cube's facets into identical smaller $(d-1)$-cubes using $(d-2)$-cubes. For example, we might subdivide the 2-faces of the 3-cube into $n^2$ smaller squares. If we then let $d$ sets cover the ridges on the subdivided facets of the $d$-cube, is it true that some set must contain antipodal vertices?  We have found that it is true for $d=3$ and $n=2$. If this result were true as $n \rightarrow \infty$, it might be used to give a combinatorial proof of the classical LSB theorem.

\bigskip
\bigskip

\end{document}